\newcommand{\wrlab}[1]{\label{#1}}
\newtheorem{thm}{THEOREM}[section]
\newtheorem{lem}[thm]{LEMMA} 
\newtheorem{cor}[thm]{COROLLARY} 
 \newtheorem{thm*}{THEOREM}[]
\newcommand{\tref}[1]{Theorem~\ref{#1}}
\newcommand{\cref}[1]{Corollary~\ref{#1}}
\newcommand{\lref}[1]{Lemma~\ref{#1}}
\def\N{{\mathbb N}} \def\Z{{\mathbb Z}} 
\def\R{{\mathbb R}}
  \def\scrb{{\mathcal B}}
 \def\scrf{{\mathcal F}} 
 \def\scrk{{\mathcal K}} 
 \def\scrp{{\mathcal P}}
\def\scrz{{\mathcal Z}}  \def\scrc{{\mathcal C}}
\def\scrn{{\mathcal N}} 
\def\bfu{{\bf1}}
\font\tenolde=eufm10 at 10pt
\font\sevenolde=eufm7
\font\fiveolde=eufm5
  \def\Lg{{\mathfrak g}}
\def\id{{\rm id}}
\def\dim{\hbox{dim\,}}
\def\exp{\hbox{exp}}
\def\Stab{\hbox{Stab}}
\begin{document}
\bibliographystyle{alpha}

%TITLE: 

\title[Generic freeness for $C^\infty$ actions]
{Generic freeness in frame bundle prolongations of $C^\infty$ actions}

\keywords{prolongation, moving frame, dynamics}
\subjclass{57Sxx, 58A05, 58A20, 53A55}

\author{Scot Adams}
\address{School of Mathematics\\ University of Minnesota\\Minneapolis, MN 55455
\\ adams@math.umn.edu}

\date{May 24, 2017\qquad Printout date: \today}

\begin{abstract}
Let a real Lie group $G$ act on a $C^\infty$ real manifold~$M$.
Assume that the action is $C^\infty$ and that
every nontrivial element of~$G$ has a nowhere dense fixpoint set in $M$.
We show that, in~some higher order frame bundle $F$ of~$M$,
there exists a $G$-invariant meager subset $Z$ of $F$ such that
the $G$-action on $F\backslash Z$ is free.
A similar result holds for submanifold jet bundles.
\end{abstract}

\maketitle

%\tableofcontents
 
%%%%%%%%%%%%%%%%%%%%%%%%%%%%%%%%%%%%%%%%%%%%%%%%%%%%%%%%%%%%%%%%%%%%%%%
%%%%%%%%%%%%%%%%%%%%%%%%%%%%%%%%%%%%%%%%%%%%%%%%%%%%%%%%%%%%%%%%%%%%%%%

\section{Introduction\wrlab{sect-intro}}

P.~Olver's freeness conjecture (in his words) asserts: ``If a Lie
group acts effectively on a manifold, then, for some $n<\infty$, the
action is free on [a nonempty] open subset of the jet bundle of order $n$.''
For {\it local} freeness, see
Theorem 6.14 of \cite{olver:movfrmsing},
Theorem 12.1 of~\cite{adams:locfreeacts}
and \tref{thm-loc-free-generic} below.
For $C^\infty$ actions, the freeness question 
was resolved in \cite{adams:cinftyctx} by a countexample for~the additive group $\Z$ of integers.
Then, in~\cite{adams:comegafreeness},
we exposed a $C^\omega$~counterexample, also an action of $\Z$.
Using Lemma 17.3 of~\cite{adams:cinftyctx},
and following the proof of~Theorem 18.1 in~\cite{adams:cinftyctx},
we see that this $C^\omega$ action of $\Z$ induces to a $C^\omega$~counterexample for any
connected Lie group with~noncompact center.
There is also a converse: By Corollary 3.2 of~\cite{adams:freeAbStab},
the frame bundle freeness conjecture holds
(in~its $C^\infty$ form and, consequently, in~its $C^\omega$~form)
for~any connected Lie group with compact center.
So, for many important Lie groups, the frame bundle freeness conjecture holds;
for many others, frame bundle freeness
(and, consequently, submanifold jet bundle freeness) fails.
It remains open whether a Lie group exists
for which the submanifold jet bundle freeness conjecture holds.

For general Lie groups, we now turn our attention to {\it generic} freeness.
In Theorem 20 of \cite{adolv:genericframe},
we proved that any effective $C^\omega$ action of~a~connected real Lie group
on a connected $C^\omega$ real manifold has a~frame bundle prolongation
that is generically free in some nonempty open subset of the total space.
In \tref{thm-genfree-frame} below,
we generalize this result to $C^\infty$ actions
of arbitrary Lie groups on arbitrary $C^\infty$ real manifolds
and prove generic freeness on the entire total space.

In Theorem 26 of \cite{adolv:genericframe},
we proved a $C^\omega$ generic freeness result for~submanifold jet bundles,
though only for connected groups,
and only on a nonempty open set in the total space.
In \cref{cor-genfree-jet} below,
we improve that to $C^\infty$, to arbitrary groups, and to the entire total space.

The results in Sections 3-7 are elementary.
Our main results are \tref{thm-ordone-stabs-nontriv}
and \tref{thm-genfree-frame}.

\section{Global notation and conventions\wrlab{sect-global}}

Let $\N:=\{1,2,3,\ldots\}$ and $\N_0:=\N\cup\{0\}$.
For any set~$S$, we denote the number of elements in $S$ by $\#S\in\N_0\cup\{\infty\}$.
For any sets $S$~and~$T$, for any function $f:S\to T$,
for any $T_0\subseteq T$, we denote the $f$-preimage of $T_0$ by
$f^*(T_0):=\{s\in S\,|\,f(s)\in T_0\}$.
For any set $S$, the identity map $\id_S:S\to S$ is defined by $\id_S(x)=x$.

Throughout this paper, by ``group'', we mean ``multiplicative group'',
unless otherwise specified.
By ``manifold'', we mean ``$C^\infty$ real manifold'',
unless otherwise specified.
By ``vector space'', we mean ``real vector space'',
unless otherwise specified.
By ``Lie group'', we mean ``real Lie group'',
unless otherwise specified.
On any principal bundle, the structure group action is
denoted as a right action; all other group actions are from the left.
Throughout this paper, a subset of~a~topological space acquires, without comment,
its relative topology, inherited from that ambient topological space.
An open subset of a manifold acquires, without comment,
its relative manifold structure, inherited from the ambient manifold.
Every finite dimensional vector space is given, without comment,
its standard topology and manifold structure.
For any group $G$, the identity element of $G$ is denoted~$1_G$.
For any~vector space~$V$, the zero element of~$V$ is denoted~$0_V$.
For all~$n\in\N$, let $0_n:=0_{\R^n}$;
then $0_n=(0,\ldots,0)\in\R^n$.
For any~action of~a group $G$ on a set $S$,
for any $A\subseteq G$, for any $x\in S$,
we define $\Stab_A(x):=\{g\in A\,|\,gx=x\}$.
An action of~a topological group~$G$ on a set $S$ is {\bf locally free} if,
for all~$x\in S$, we have: $\Stab_G(x)$ is discrete.

Let a group $G$ act on a topological space $X$.
The $G$-action on $X$ is~{\bf fixpoint rare} if,
for all $g\in G\backslash\{1_G\}$,
the set $\{x\in X\,|\,gx=x\}$ has empty interior in $X$,
{\it i.e.}, if no nontrivial element of $G$ fixes
every point in a nonempty open subset of~$X$.
If $G$ is a connected locally compact Hausdorff topological group,
if $X$ is locally compact Hausdorff
and if~the $G$-action on $X$ is continuous,
then, in Lemma 6.1 of~\cite{adams:locfreeacts},
we~show that the $G$-action on $X$ is fixpoint rare iff,
for every $G$-invariant nonempty open subset~$V$ of $X$,
the $G$-action on $V$ is~effective.

Let $M$ be a manifold.
We will denote the tangent bundle of $M$ by~$TM$;
a $C^\infty$ action of a Lie group $G$ on $M$ induces, by differentiation, a~$G$-action on $TM$.
For any $x\in M$, the tangent space to $M$ at~$x$ is~denoted $T_xM$.
For any $k\in\N_0$,
let $\scrf^kM$ denote the $k$th order frame bundle of $M$;
a $C^\infty$ action of a Lie group $G$ on $M$ induces, by~differentiation, a.k.a.~``prolongation'',
a (left) $G$-action on~$\scrf^kM$,
commuting with the (right) action of the structure group.
For any integer $k\ge0$, for all $x\in M$, the fiber in~$\scrf^kM$ over~$x$ is denoted $\scrf^k_xM$.

Let $M$ be a manifold.
Let $d:=\dim M$ and let $i,j\in\N_0$.
Both $\scrf^{i+j}M$~and $\scrf^i\scrf^jM$ are fiber bundles over $\scrf^jM$.
Let $\scrp$ be the vector space of polynomial functions $\R^d\to\R^d$ of degree $\le j$.
Let $\varepsilon_1,\ldots,\varepsilon_d$ be the standard basis of $\R^d$.
Define
$$\Gamma\,\,:=\,\,\{\,(\alpha_1,\ldots,\alpha_d,\delta)\in\N_0^{d+1}\,\,|\,\,\alpha_1+\cdots+\alpha_d\le j\,\,,\,\,1\le\delta\le d\,\}.$$
For every $\gamma=(\alpha_1,\ldots,\alpha_d,\delta)\in\Gamma$,
let the function $P_\gamma\in\scrp$ be defined by~$P_\gamma(x_1,\ldots,x_d)=x_1^{\alpha_1}\cdots x_d^{\alpha_d}\varepsilon_\delta$.
Let $\scrb:=\{P_\gamma\,|\,\gamma\in\Gamma\}$.
Then $\scrb$ is a basis of $\scrp$.
Then $\dim\scrp=\#\scrb=\#\Gamma$.
The lexicographic ordering on $\Gamma$ corresponds to an ordering of the basis $\scrb$.
The resulting ordered basis of~$\scrp$ gives rise to a vector space isomorphism $\kappa:\scrp\to\R^{\#\Gamma}$.
Following~\S10 of \cite{adams:locfreeacts},
we construct a map $\Phi_{M,\kappa}^{i,j}:\scrf^{i+j}M\to\scrf^i\scrf^jM$
of fiber bundles over $\scrf^jM$.
In~this paper, we define $\Psi_M^{i,j}:=\Phi_{M,\kappa}^{i,j}$.
By~Lemma 10.1 of~\cite{adams:locfreeacts},
$\Psi_M^{i,j}:\scrf^{i+j}M\to\scrf^i\scrf^jM$ is natural in $M$.

\section{Miscellaneous results\wrlab{sect-misc-results}}

\begin{lem}\wrlab{lem-stababovebelow}
Let a Lie group $G$ have a $C^\infty$ action on a manifold $M$.
Let $k\in\N$, $X:=\scrf^kM$, $W:=\scrf^{k-1}M$, $x\in X$.
Let~$\sigma:X\to W$ be the canonical map, $w:=\sigma(x)$, $v\in T_wW$.
Then $\Stab_G(x)\subseteq\Stab_G(v)$.
\end{lem}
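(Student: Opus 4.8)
The plan is to realize the ``top-order part'' of $x$ as a genuine linear frame on the tangent space $T_wW$, using the natural bundle map introduced in \secref{sect-global}. Concretely, I would take $i=1$, $j=k-1$ and consider
$$\Psi_M^{1,k-1}\colon\scrf^kM\to\scrf^1\scrf^{k-1}M=\scrf^1W,$$
which is a map of fiber bundles over $W=\scrf^{k-1}M$. Since the bundle projection $\scrf^kM\to\scrf^{k-1}M$ underlying this map is the canonical map $\sigma$, the point $\hatx:=\Psi_M^{1,k-1}(x)$ lies in the fiber $\scrf^1_wW$; that is, $\hatx$ is a first order frame at $w$, which I would view as a linear isomorphism between $\R^{\dim W}$ and $T_wW$.

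The key step is to transport the hypothesis $g\in\Stab_G(x)$ through $\Psi_M^{1,k-1}$. Each $g\in G$ acts on $M$ as a diffeomorphism, so naturality of $\Psi_M^{1,k-1}$ in $M$ (Lemma 10.1 of \cite{adams:locfreeacts}) yields $G$-equivariance, namely $\Psi_M^{1,k-1}(gx)=g\,\Psi_M^{1,k-1}(x)$ for all $g\in G$. Hence, if $gx=x$, then $g\hatx=\hatx$ in $\scrf^1W$. Now the prolonged $G$-action on the linear frame bundle $\scrf^1W$ is the action by differentials: since $g$ fixes the common basepoint $w$ (so $gw=w$), the element $g$ sends the frame $\hatx$ to $dg_w\circ\hatx$, where $dg_w\colon T_wW\to T_wW$ is the differential at $w$ of the prolonged $G$-action on $W$. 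Because $\hatx$ is invertible as a linear map, the identity $dg_w\circ\hatx=\hatx$ forces $dg_w=\id_{T_wW}$.

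Finally, the induced $G$-action on $TW$ is precisely differentiation of the $G$-action on $W$, so for every $v\in T_wW$ we obtain $gv=dg_w(v)=v$; that is, $g\in\Stab_G(v)$. As $g\in\Stab_G(x)$ was arbitrary, this gives $\Stab_G(x)\subseteq\Stab_G(v)$, as desired. I expect the only real subtleties to be bookkeeping: confirming that the bundle projection underlying $\Psi_M^{1,k-1}$ is indeed $\sigma$, and pinning down the side conventions so that ``$g$ fixes the invertible frame $\hatx$'' correctly yields $dg_w=\id_{T_wW}$ rather than some conjugate or inverse. These amount to unwinding the definitions in \S10 of \cite{adams:locfreeacts}, and in any case the conclusion $dg_w=\id_{T_wW}$ is independent of the chosen convention.
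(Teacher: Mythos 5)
Your proposal is correct and follows essentially the same route as the paper: both apply $\Psi_M^{1,k-1}$ to obtain a first-order frame at $w$, use naturality ($G$-equivariance) to conclude that $g$ fixes that frame, and deduce that $g$ acts as the identity on $T_wW$. The paper simply phrases the frame as an ordered basis $(u_1,\ldots,u_r)$ of $T_wW$ and checks $gv=v$ by expanding $v$ in that basis, which sidesteps the convention issue you flag at the end.
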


\begin{proof}
Let $g\in\Stab_G(x)$ be given.
We wish to show that $g\in\Stab_G(v)$.

Let $y:=\Psi_M^{1,k-1}(x)\in\scrf^1_wW$.
Let $r:=\dim T_wW$.
Identify $y\in\scrf^1_wW$ with an ordered basis of $T_wW$,
and then fix $u_1,\ldots,u_r\in T_wW$ such that $y=(u_1,\ldots,u_r)$.
Then $\{u_1,\ldots,u_r\}$ is a basis for $T_wW$,
and so, as $v\in T_wW$, choose $c_1,\ldots,c_r\in\R$ such that $v=c_1u_1+\cdots+c_ru_r$.
Since $g\in\Stab_G(x)$, we see that $gx=x$.
By naturality, the map $\Psi_M^{1,k-1}:X\to\scrf^1W$ is $G$-equivariant.
So, since $gx=x$, we get $gy=y$.
So, since $y=(u_1,\ldots,u_r)$,
we get $(gu_1,\ldots,gu_r)=(u_1,\ldots,u_r)$.
Then
$gv=g(c_1u_1+\cdots+c_ru_r)
=c_1(gu_1)+\cdots+c_r(gu_r)
=c_1u_1+\cdots+c_ru_r=v$,
and so $g\in\Stab_G(v)$, as desired.
\end{proof}

\begin{lem}\wrlab{lem-immersiveness}
Let a Lie group $G$ have a $C^\infty$ action on a manifold~$M$.
Let $p\in M$, $a\in G$.
Assume that $\Stab_G(p)$ is discrete.
Define $f:G\to M$ by $f(g)=gp$.
Then $(df)_a:T_aG\to T_{ap}M$ is injective.
\end{lem}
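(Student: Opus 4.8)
The plan is to reduce to the case $a=1_G$ by left translation, and then to deduce injectivity at the identity from the uniqueness of integral curves together with the discreteness hypothesis.

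First I would record an equivariance property of $f$. Let $L_a:G\to G$ denote left translation $g\mapsto ag$, and let $\lambda_a:M\to M$ denote the diffeomorphism $m\mapsto am$ given by the action of $a$. For every $g\in G$ we have $f(ag)=(ag)p=a(gp)=\lambda_a(f(g))$, so $f\circ L_a=\lambda_a\circ f$. Differentiating at $1_G$ and using the chain rule (noting $f(1_G)=p$ and $L_a(1_G)=a$) yields $(df)_a\circ(dL_a)_{1_G}=(d\lambda_a)_p\circ(df)_{1_G}$. Since $L_a$ and $\lambda_a$ are diffeomorphisms, the maps $(dL_a)_{1_G}$ and $(d\lambda_a)_p$ are linear isomorphisms; hence $(df)_a$ is injective if and only if $(df)_{1_G}$ is injective. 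It therefore suffices to prove that $(df)_{1_G}:T_{1_G}G\to T_pM$ is injective.

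To prove injectivity at $1_G$, suppose $\xi\in T_{1_G}G$ satisfies $(df)_{1_G}(\xi)=0$; I want to conclude $\xi=0$. Because $t\mapsto\exp(t\xi)$ is a one-parameter subgroup and the action is $C^\infty$, the assignment $\phi_t:=\lambda_{\exp(t\xi)}$ is a smooth one-parameter group of diffeomorphisms of $M$, with infinitesimal generator the smooth vector field $X$ on $M$ whose value at $m$ is $X(m)=\frac{d}{dt}\big|_{t=0}(\exp(t\xi)\cdot m)$. Since $f(\exp(t\xi))=\exp(t\xi)\cdot p=\phi_t(p)$ and $\exp$ is a curve through $1_G$ with velocity $\xi$ at $t=0$, the chain rule gives $X(p)=(df)_{1_G}(\xi)=0$, so $p$ is a zero of $X$. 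Consequently the constant curve at $p$ and the orbit curve $t\mapsto\phi_t(p)$ are both integral curves of $X$ issuing from $p$; by uniqueness of integral curves of the smooth vector field $X$ they coincide, giving $\exp(t\xi)\cdot p=\phi_t(p)=p$, that is, $\exp(t\xi)\in\Stab_G(p)$, for all $t\in\R$.

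Finally, the continuous map $t\mapsto\exp(t\xi)$ sends the connected space $\R$ into the discrete set $\Stab_G(p)$, and therefore must be constant; since its value at $t=0$ is $1_G$, we get $\exp(t\xi)=1_G$ for all $t$, and differentiating at $t=0$ yields $\xi=0$. This establishes injectivity of $(df)_{1_G}$, hence of $(df)_a$. The main obstacle is the third step: correctly realizing the orbit curve $t\mapsto\exp(t\xi)\cdot p$ as an integral curve of the fundamental vector field $X$ and invoking ODE uniqueness to promote the infinitesimal condition $X(p)=0$ to the finite statement that the entire one-parameter subgroup generated by $\xi$ fixes $p$. The translation reduction and the final connectedness argument are routine.
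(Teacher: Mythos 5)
Your proof is correct, but it takes a genuinely different route from the paper's. The paper's argument is a two-step citation: it invokes Lemma 4.1 of the companion paper \cite{adams:locfreeacts} to identify $\ker[(df)_a]$ with the tangent space $T_a(aG_p)$ to the coset $a\,\Stab_G(p)$, and then observes that discreteness of $\Stab_G(p)$ makes that tangent space trivial; notably, the paper works directly at the point $a$ and never reduces to the identity. You instead give a self-contained argument: reduce to $a=1_G$ via the equivariance $f\circ L_a=\lambda_a\circ f$, realize a kernel vector $\xi$ as a zero at $p$ of the fundamental vector field $X$ generated by $\xi$, use uniqueness of integral curves to conclude that the whole one-parameter subgroup $\exp(t\xi)$ fixes $p$, and finish with the connectedness-versus-discreteness argument to force $\xi=0$. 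Each step checks out (in particular, $t\mapsto\exp(t\xi)\cdot p$ really is an integral curve of $X$, since $\exp((t+s)\xi)=\exp(s\xi)\exp(t\xi)$). What your approach buys is independence from the cited lemma and a transparent explanation of \emph{why} discreteness of the stabilizer is the right hypothesis; what the paper's approach buys is brevity and reuse of machinery (the identification $\ker[(df)_a]=T_a(aG_p)$) already established elsewhere, which packages the same infinitesimal-to-finite promotion you carry out by hand.
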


\begin{proof}
For all $x\in M$, let $G_x:=\Stab_G(x)$.
Then $G_p$ is discrete,
so $aG_p$~is discrete,
so $T_a(aG_p)$ is trivial.
Also, by Lemma 4.1 of \cite{adams:locfreeacts}
(with $\overline{p}$ replaced by~$f$), $T_a(aG_p)=\ker[(df)_a]$.
Then $\ker[(df)_a]$ is trivial,
and so $(df)_a:T_aG\to T_{ap}M$ is injective, as desired.
\end{proof}

We make a small improvement to Theorem 12.1 of \cite{adams:locfreeacts}:

\begin{thm}\wrlab{thm-loc-free-generic}
Let a Lie group $G$ have a $C^\infty$ action on a manifold~$M$.
Let $G^\circ$ be the identity component of $G$.
Assume the $G^\circ$-action on $M$ is fixpoint rare.
Let $n:=\dim G$.
Let $\ell\in\N_0$, and assume that~$\ell\ge n-1$.
Let $\pi:\scrf^\ell M\to M$ denote the structure map of~$\scrf^\ell M$.
Then there exists a $G$-invariant dense open subset~$M_1$ of $M$
such that the $G$-action on~$\pi^*(M_1)$ is locally free.
\end{thm}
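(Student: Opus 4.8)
The plan is to recast local freeness of the prolonged action infinitesimally, reduce the problem to a statement about a ``bad locus'' in the base $M$, and then run a soft topological argument that feeds the analytic core of Theorem 12.1 of \cite{adams:locfreeacts} (the existence of a single locally free $\ell$-frame) back through the $G$-action.

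First I would fix, for each $x\in M$ and each $X$ in the Lie algebra $\Lg$ of $G$, the fundamental vector field $\xi_X$ of the action, and set $\Lk_x:=\{X\in\Lg\,|\,\hbox{the $\ell$-jet of $\xi_X$ at $x$ vanishes}\}$. Writing an $\ell$-frame at $x$ as the $\ell$-jet of a local diffeomorphism carrying $0$ to $x$, the prolonged action of $\exp(tX)$ fixes such a frame for all $t$ exactly when the $\ell$-jet at $x$ of the diffeomorphism $\exp(tX)$ is trivial for all $t$, a condition on $x$ alone equivalent to $X\in\Lk_x$ (the one-step form of this frame-independence is precisely \lref{lem-stababovebelow}). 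Hence, for every frame $\xi$ over $x$, the Lie algebra of $\Stab_G(\xi)$ equals $\Lk_x$, so the frames over $x$ are all locally free iff $\Lk_x=\{0\}$. The set $B:=\{x\in M\,|\,\Lk_x\ne\{0\}\}$ is therefore the exact base-locus I must excise, and I will take $M_1:=M\setminus B$.

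Next I would verify that $B$ is closed and $G$-invariant. The map $\Lambda_x:\Lg\to J^\ell_x(TM)$, $X\mapsto j^\ell_x\xi_X$, is linear and smooth in $x$, with $\dim\Lk_x=n-\mathrm{rank}\,\Lambda_x$; as rank is lower semicontinuous, $B=\{x\,|\,\dim\Lk_x\ge1\}$ is closed. Equivariance of fundamental vector fields gives $\Lk_{gx}=\Ad(g)\,\Lk_x$, so $B$ is $G$-invariant. Thus $M_1$ is open and $G$-invariant, and by the previous step the $G$-action on $\pi^*(M_1)$ is locally free. It remains only to prove that $M_1$ is dense, i.e.\ that $B$ has empty interior.

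For density I would argue by contradiction and saturate. Suppose $V\subseteq B$ is nonempty and open; then $G\cdot V$ is a nonempty open $G$-invariant submanifold, still contained in $B$ because $B$ is $G$-invariant. Since an open subset inherits the empty-interior property of each fixpoint set $\{y\,|\,gy=y\}$, the $G^\circ$-action on $G\cdot V$ is again fixpoint rare, so Theorem 12.1 of \cite{adams:locfreeacts} applies, with the same $n$ and the same $\ell\ge n-1$, to the $G$-action on the manifold $G\cdot V$; this produces at least one locally free $\ell$-frame over $G\cdot V$. But $G\cdot V\subseteq B$ forces every $\ell$-frame over every point of it to have nondiscrete stabilizer, a contradiction; hence $B$ has empty interior and $M_1$ is dense. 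I expect the main obstacle to be exactly this last step: the soft topology works only because the bad locus is genuinely $G$-invariant, so that saturation keeps us inside it, and because fixpoint rareness descends to invariant open pieces. The genuinely hard analytic input---that $\ell\ge n-1$ already forces one locally free frame, via the pigeonhole collapse of the descending chain $\Lk^{(0)}_x\supseteq\cdots\supseteq\Lk^{(\ell)}_x$ that fixpoint rareness forbids from stabilizing at a nonzero value---is what Theorem 12.1 of \cite{adams:locfreeacts} supplies and what I would take as given.
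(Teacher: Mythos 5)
Your argument is correct, but it takes a genuinely different route from the paper's. The paper works entirely at the group level: it quotes Theorem 12.1 of \cite{adams:locfreeacts} to obtain a $G$-invariant dense open locally free subset $Q$ of $\scrf^{n-1}M$, pulls it back along the canonical map $\sigma:\scrf^\ell M\to\scrf^{n-1}M$ to a set $R$ (using $\Stab_G(x)\subseteq\Stab_G(\sigma(x))$), sets $M_1:=\pi(R)$, and then checks $\pi^*(M_1)=RH$ for the structure group $H$ together with $\Stab_G(xh)=\Stab_G(x)$ --- that last identity being the elementary, group-level counterpart of your observation that local freeness at one frame over a base point forces it at every frame over that point. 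You instead characterize the stabilizer Lie algebra of a frame infinitesimally, as $\Lk_x=\{X\in\Lg\,|\,j^\ell_x\xi_X=0\}$, so that the bad locus $B$ lives canonically in the base, is closed by rank semicontinuity and $G$-invariant by $\Ad$-equivariance, and Theorem 12.1 is invoked only as a pure existence statement applied to the saturation $G\cdot V$ of a hypothetical open subset of $B$. Your route buys a canonical (indeed maximal) choice of $M_1$ and makes the obstruction visible as a vanishing-jet condition on infinitesimal generators; the paper's route is softer and avoids the (standard but not free) identification of the Lie algebra of $\Stab_G(\xi)$ with $\Lk_x$, needing only continuity and openness of the bundle maps. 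Two small repairs to your write-up: Theorem 12.1 is stated at order $n-1$, not at order $\ell$, so your contradiction should first produce a locally free $(n-1)$-frame over $G\cdot V$ and then pass to order $\ell$ via the inclusion of jet kernels (the descending chain you already mention); and the frame-independence you appeal to is really the structure-group identity $\Stab_G(\xi h)=\Stab_G(\xi)$, not \lref{lem-stababovebelow}, which compares a $k$-frame with tangent vectors one level down.
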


\begin{proof}
If $n=0$, then $G$ is discrete, so every action of $G$ is locally free,
and we may set $M_1:=M$.
We assume $n\ge1$.

By Theorem 12.1 of \cite{adams:locfreeacts},
choose a $G$-invariant dense open subset~$Q$ of $\scrf^{n-1}M$
s.t.~the $G$-action on~$Q$ is locally free.
Let $\sigma:\scrf^\ell M\to\scrf^{n-1}M$ be the canonical map and let $R:=\sigma^*(Q)$.
Since $\sigma:\scrf^\ell M\to\scrf^{n-1}M$ is $G$-equivariant and continuous and open,
and since $Q$ is a $G$-invariant dense open subset of $\scrf^{n-1}M$,
we see that $R$~is a  $G$-invariant dense open subset of $\scrf^\ell M$.
Since $\sigma:\scrf^\ell M\to\scrf^{n-1}M$ is $G$-equivariant,
we see: $\forall x\in \scrf^\ell M$, $\Stab_G(x)\subseteq\Stab_G(\sigma(x))$.
So, since the~$G$-action on~$Q$ is locally free,
the~$G$-action on $R$ is also locally free.
Let $M_1:=\pi(R)$.
As $\pi:\scrf^\ell M\to M$ is $G$-equivariant
and as $R$~is $G$-invariant,
we see that $M_1$~is $G$-invariant.
As $\pi:\scrf^\ell M\to M$ is continuous, open and surjective,
and as $R$ is dense open in~$\scrf^\ell M$,
we see that $M_1$~is dense open in~$M$.
It remains to~show: the~$G$-action on $\pi^*(M_1)$ is locally free.

Let $H$ be the structure group of $\scrf^\ell M$.
For all $x\in \scrf^\ell M$, for all~$h\in H$, we have
$\Stab_G(xh)=\Stab_G(x)$.
So, since the $G$-action on $R$ is locally free,
it follows that the $G$-action on $RH$ is also locally free.
So, as $\pi^*(M_1)=\pi^*(\pi(R))=RH$,
the $G$-action on~$\pi^*(M_1)$ is locally free.
\end{proof}

\section{Flow boxes\wrlab{sect-flowboxes}}

\begin{lem}\wrlab{lem-flow-box-existence}
Let $G$ be a Lie group acting on a manifold~$M$.
Assume that the $G$-action on $M$ is $C^\infty$.
Let $e:=1_G$ and let~$p\in M$.
Assume that $\Stab_G(p)$ is discrete.
Then there exist
\begin{itemize}
\item open neighborhoods \quad $G_0$ in $G$ of $e$ \quad and \quad $M_0$ in $M$ of $p$,
\item a $C^\infty$ function $\gamma:M_0\to G_0$ \qquad and \qquad $\bullet$ a subset $T$ of $M$
\end{itemize}
such that: \qquad $p\in T\subseteq G_0T=M_0$, \quad $\gamma(p)=e$,
\begin{itemize}
\item[] \vskip-.05in
the map $(g,t)\mapsto gt:G_0\times T\to M_0$ is a bijection \qquad and
\item[] \qquad\qquad\quad $\forall x\in M_0,\,\,[(\gamma(x))^{-1}]x\in T$.
\end{itemize}
\end{lem}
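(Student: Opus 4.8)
The plan is to build a local slice (flow box) transverse to the orbit through $p$ and then apply the inverse function theorem. Let $d:=\dim M$ and $n:=\dim G$, and define the orbit map $f:G\to M$ by $f(g)=gp$. Since $\Stab_G(p)$ is discrete, \lref{lem-immersiveness} (with $a:=e$) shows that $(df)_e:T_eG\to T_pM$ is injective, so its image $V_0:=\im[(df)_e]$ is an $n$-dimensional subspace of $T_pM$. First I would choose a complementary subspace $V$ of $V_0$ in $T_pM$ (so $T_pM=V_0\oplus V$ and $\dim V=d-n$) and, working in a chart about $p$, construct a submanifold $T$ of $M$ with $p\in T$ and $T_pT=V$. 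This $T$ serves as the transversal.

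Next, consider the map $F:G\times T\to M$ defined by $F(g,t)=gt$, which is $C^\infty$ because the action is. The key computation is the differential of $F$ at $(e,p)$. Writing the action as $a(g,x)=gx$, we have $F=a\circ(\id_G\times\iota)$, where $\iota:T\hookrightarrow M$ is the inclusion, so $(dF)_{(e,p)}(\xi,w)=(df)_e(\xi)+w$ for $\xi\in T_eG$ and $w\in T_pT=V$, using $a(e,\cdot)=\id_M$. Since $V_0=\im[(df)_e]$ and $V$ are complementary and $(df)_e$ is injective, the resulting map $T_eG\oplus V\to T_pM$ is a linear isomorphism; hence $(dF)_{(e,p)}$ is invertible.

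By the inverse function theorem, $F$ restricts to a diffeomorphism of some open neighborhood of $(e,p)$ in $G\times T$ onto an open neighborhood of $p$ in $M$. Then I would shrink to product form: choose an open neighborhood $G_0$ of $e$ in $G$ and, after replacing $T$ by a smaller open subset of itself still containing $p$, arrange $G_0\times T$ to lie inside that neighborhood, and set $M_0:=F(G_0\times T)=G_0T$. Since $F$ is an open map near $(e,p)$, the set $M_0$ is open, and $F:G_0\times T\to M_0$ is a bijection. Finally, define $\gamma:M_0\to G_0$ to be the composite of $(F|_{G_0\times T})^{-1}$ with the projection $G_0\times T\to G_0$; this is $C^\infty$ and satisfies $\gamma(p)=e$ because $F(e,p)=p$. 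The remaining assertions then follow directly: $p\in T$ and $e\in G_0$ give $T\subseteq G_0T=M_0$, and for $x\in M_0$, writing $(F|_{G_0\times T})^{-1}(x)=(\gamma(x),\tau(x))$ with $\tau(x)\in T$ yields $x=\gamma(x)\,\tau(x)$, whence $(\gamma(x))^{-1}x=\tau(x)\in T$.

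The main obstacle is bookkeeping rather than depth: the inverse function theorem only yields a local diffeomorphism on some a~priori non-product neighborhood, so one must carefully carve out a genuine product neighborhood $G_0\times T$ on which $F$ remains injective with open image, so that $\gamma$ is well defined on all of $M_0$ and the identity $G_0T=M_0$ holds exactly. Constructing the transversal $T$ with the prescribed tangent space $V$ and verifying the differential formula for $(dF)_{(e,p)}$ are the only other points requiring care.
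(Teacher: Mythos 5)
Your proposal is correct and follows essentially the same route as the paper's proof: use \lref{lem-immersiveness} to get injectivity of $(df)_e$, pick a complement $V$ to its image in $T_pM$, show the action map on $G\times(\hbox{transversal})$ has invertible differential at $(e,p)$, apply the Inverse Function Theorem, shrink to a product neighborhood $G_0\times T$, and define $\gamma$ as projection composed with the inverse. The only (cosmetic) difference is that you realize the transversal directly as a submanifold $T$ with $T_pT=V$, whereas the paper parametrizes it by a map $r:\R^{d-n}\to M$ and sets $T:=r(T_0)$.
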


The Frobenius Theorem is neither needed nor useful
in proving this.

\begin{proof}
Define $f:G\to M$ by $f(g)=gp$.
By \lref{lem-immersiveness} (with $a$~replaced by $e$),
$(df)_e:T_eG\to T_pM$~is injective.
Let $\Lg:=T_eG$, $W:=T_pM$, $\phi:=(df)_e$.
Then $\phi:\Lg\to W$ is injective.
Let $U:=\phi(\Lg)$.
Then $U$ is a vector subspace of $W$
and $\phi:\Lg\to U$ is a vector space isomorphism.
Then $\dim\Lg=\dim U$.
Let $d:=\dim M$, $n:=\dim G$, $c:=d-n$.
We have $\dim W=\dim M=d$.
Also, $\dim U=\dim\Lg=\dim G=n$.
Choose a vector subspace $V$ of $W$ such that $U\cap V=\{0_W\}$
and such that $\dim V=c$.
Let $V_0:=T_{0_c}\R^c$.
Then $\dim V_0=c=\dim V$.
Choose a~$C^\infty$~function $r:\R^c\to M$
such that $r(0_c)=p$
and $((dr)_{0_c})(V_0)=V$.
Let $\rho:=(dr)_{0_c}:V_0\to W$.
Then $\rho(V_0)=V$, so $\rho:V_0\to V$ is surjective.
So, since $\dim V_0=\dim V$,
$\rho:V_0\to V$ is a vector space isomorphism.

Define $s:G\times\R^c\to M$ by $s(g,y)=g[r(y)]$.
Let $q:=(e,0_c)\in G\times\R^c$.
Then $s(q)=e[r(0_c)]=ep=p$.
Let $\sigma:=(ds)_q:\Lg\oplus V_0\to W$.
For all~$g\in G$,
$s(g,0_c)=g[r(0_c)]=gp=f(g)$.
It follows, for all~$X\in\Lg$, that $\sigma(X,0_{V_0})=\phi(X)$.
For all $y\in\R^c$, $s(e,y)=e[r(y)]=r(y)$.
It follows, for all $Y\in V_0$, that $\sigma(0_\Lg,Y)=\rho(Y)$.

{\it Claim 1:}
The map $\sigma:\Lg\oplus V_0\to W$ is injective.
{\it Proof of Claim 1:}
We wish to show: $\ker\sigma=\{(0_\Lg,0_{V_0})\}$.
Let $X\in\Lg$ and $Y\in V_0$ be given,
and assume $\sigma(X,Y)=0_W$.
We wish to show: $X=0_\Lg$ and $Y=0_{V_0}$.

As we observed,
$\sigma(X,0_{V_0})=\phi(X)$ and $\sigma(0_\Lg,Y)=\rho(Y)$.
Then
$[\phi(X)]+[\rho(Y)]=[\sigma(X,0_{V_0})]+[\sigma(0_\Lg,Y)]=\sigma(X,Y)=0_W$.
It follows that $\phi(X)=\rho(-Y)\in\rho(V_0)=V$.
Also, $\phi(X)\in\phi(\Lg)=U$.
Then $\phi(X)\in U\cap V=\{0_W\}$.
Then $\phi(X)=0_W=\phi(0_\Lg)$, so, by injectivity of~$\phi$,
we conclude that $X=0_\Lg$.
It remains to show that $Y=0_{V_0}$.

We have $\rho(Y)=-[\phi(X)]=0_W=\rho(0_{V_0})$.
So, since $\rho$ is injective, we get $Y=0_{V_0}$, as desired.
{\it End of proof of Claim~1.}

We have $\dim(\Lg\oplus V_0)=(\dim\Lg)+(\dim V_0)=n+c=d=\dim W$.
By~Claim 1, $\sigma:\Lg\oplus V_0\to W$ is injective.
Then $\sigma:\Lg\oplus V_0\to W$ is a vector space isomorphism.
So, since $\sigma=(ds)_q$, by the Inverse Function Theorem,
choose an open neighborhood $N$ in~$G\times\R^c$ of~$q$
such that $s(N)$~is an open subset of $M$
and $s|N:N\to s(N)$ is a $C^\infty$ diffeomorphism.
Recall: $q=(e,0_c)\in G\times\R^c$, $s(q)=p\in M$.
Choose open neighborhoods~$G_0$ in $G$ of~$e$ and $T_0$ in $\R^c$ of~$0_c$
such that $G_0\times T_0\subseteq N$.
Then $G_0\times T_0$ is an open neighborhood in $G\times\R^c$ of $q$.
Let $M_0:=s(G_0\times T_0)$.
Then $M_0$ is an open neighborhood in $M$ of $p$.
Let $s_0:=s|(G_0\times T_0)$.
Then $s_0:G_0\times T_0\to M_0$ is a $C^\infty$ diffeomorphism.
For all $g\in G$, for all $y\in T_0$, $s_0(g,y)=s(g,y)=g[r(y)]$.
Let $T:=r(T_0)$.
Then $M_0=s(G_0\times T_0)=G_0[r(T_0)]=G_0T$.

{\it Claim 2:}
The map $(g,t)\mapsto gt:G_0\times T\to M_0$ is a bijection.
{\it Proof of Claim 2:}
Since $M_0=G_0T$, we need only prove injectivity.
Let $g,h\in G_0$ and $t,u\in T$ be given,
and assume that $gt=hu$.
We wish to~show: $g=h$ and $t=u$.
Since $t,u\in T=r(T_0)$,
choose $y,z\in T_0$ such that $t=r(y)$ and $u=r(z)$.
It suffices to show: $g=h$ and $y=z$.

We have
$s_0(g,y)=g[r(y)]=gt=hu=h[r(z)]=s_0(h,z)$.
So, by~injectivity of $s_0$, we get $g=h$ and $y=z$.
{\it End of proof of Claim~2.}

As $e\in G_0$, we have $T\subseteq G_0T$.
So, since $p=r(0_c)\in r(T_0)=T$ and $G_0T=M_0$, by Claim 2,
it remains only to show that there exists a~$C^\infty$~function $\gamma:M_0\to G_0$
such that:
\begin{itemize}
\item[]both \quad ( $\gamma(p)=e$ ) \quad and \quad (~$\forall x\in M_0,\,\,[(\gamma(x))^{-1}]x\in T$ ).
\end{itemize}

We have $s_0(q)=s(q)=p$,
so $s_0^{-1}(p)=q$.
Define $\pi:G_0\times T_0\to G_0$ by $\pi(g,y)=g$.
Then $\pi(q)=\pi(e,0_c)=e$.
Let $\gamma:=\pi\circ s_0^{-1}:M_0\to G_0$.
Then $\gamma(p)=(\pi\circ s_0^{-1})(p)=\pi(s_0^{-1}(p))=\pi(q)=e$.
Let $x\in M_0$ be given.
We wish to~prove that $[(\gamma(x))^{-1}]x\in T$.

Because $s_0:G_0\times T_0\to M_0$ is a surjective mapping,
choose $g\in G_0$ and $y\in T_0$ such that $s_0(g,y)=x$.
Then $s_0^{-1}(x)=(g,y)$.
It follows that $\gamma(x)=(\pi\circ s_0^{-1})(x)=\pi(s_0^{-1}(x))=\pi(g,y)=g$.
Then
\begin{eqnarray*}
&&[(\gamma(x))^{-1}]x\,\,=\,\,g^{-1}x=g^{-1}[s_0(g,y)]\\
&&\quad=\,\,g^{-1}g[r(y)]\,\,=\,\,r(y)\,\,\in\,\,r(T_0)\,\,=\,\,T,
\end{eqnarray*}
as desired.
\end{proof}

\section{A regularity result\wrlab{sect-regularity}}

\begin{lem}\wrlab{lem-regularity}
Let $G$ be a Lie group acting on a manifold~$M$.
Assume that the $G$-action on $M$ is $C^\infty$ and locally free.
Let $R$~be an open subset of $M$.
Let $\eta:R\to G$ be a continuous function.
Define $\overline{\eta}:R\to M$ by~$\overline{\eta}(x)=[\eta(x)]x$.
Assume $\overline{\eta}$ is $C^\infty$.
Then $\eta$ is~$C^\infty$.
\end{lem}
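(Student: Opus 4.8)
The plan is to prove smoothness of $\eta$ locally, so I fix an arbitrary $x_0\in R$ and aim to show $\eta$ is $C^\infty$ on a neighborhood of $x_0$. First I would normalize the value $\eta(x_0)$ to the identity. Setting $a:=\eta(x_0)$, define $\delta:R\to G$ by $\delta(x):=a^{-1}\eta(x)$ and $\psi:R\to M$ by $\psi(x):=a^{-1}[\overline{\eta}(x)]$. Since $\overline{\eta}(x)=[\eta(x)]x$, associativity of the action gives $\psi(x)=[\delta(x)]x$, i.e.\ $\psi=\overline{\delta}$ in the notation of the lemma. The map $z\mapsto a^{-1}z$ is a diffeomorphism of $M$, so $\psi$ is $C^\infty$; likewise $g\mapsto ag$ is a diffeomorphism of $G$, so it suffices to prove $\delta$ is $C^\infty$ near $x_0$, after which $\eta=a\delta$ will be $C^\infty$ near $x_0$. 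By construction $\delta(x_0)=1_G$ and $\psi(x_0)=x_0$. Thus I will have reduced to the case $\eta(x_0)=1_G$.

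Next, since the action is locally free, $\Stab_G(x_0)$ is discrete, so I may apply \lref{lem-flow-box-existence} at $p:=x_0$ to obtain open neighborhoods $G_0$ of $1_G$ in $G$ and $M_0$ of $x_0$ in $M$, a $C^\infty$ map $\gamma:M_0\to G_0$ with $\gamma(x_0)=1_G$, and a subset $T\ni x_0$ with $G_0T=M_0$ such that $(g,t)\mapsto gt:G_0\times T\to M_0$ is a bijection and $[\gamma(z)^{-1}]z\in T$ for all $z\in M_0$. The bijectivity says exactly that every $z\in M_0$ has a unique factorization $z=g\,t$ with $g\in G_0$ and $t\in T$, and for this factorization $g=\gamma(z)$ and $t=[\gamma(z)^{-1}]z$.

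The key computation then uses this uniqueness, applied to $\psi(x)$. By continuity of $\delta$, $\gamma$, and $\psi$, together with $\delta(x_0)=1_G$, $\gamma(x_0)=1_G$, $\psi(x_0)=x_0\in M_0$, and openness of $G_0$ and $M_0$, I can choose a neighborhood $R_0$ of $x_0$ in $R$ on which $x\in M_0$, $\psi(x)\in M_0$, and the product $[\delta(x)][\gamma(x)]$ lies in $G_0$. For $x\in R_0$ I would write $x=[\gamma(x)]\,t$ with $t:=[\gamma(x)^{-1}]x\in T$, so that $\psi(x)=[\delta(x)]x=[\delta(x)\gamma(x)]\,t$ exhibits $\psi(x)$ as a product of an element of $G_0$ with the element $t\in T$. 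Applying uniqueness of factorization to $\psi(x)\in M_0$ then forces $\gamma(\psi(x))=\delta(x)\gamma(x)$, whence $\delta(x)=[\gamma(\psi(x))]\,[\gamma(x)]^{-1}$. The right-hand side is a composition of the $C^\infty$ maps $\gamma$ and $\psi$ with group multiplication and inversion, so $\delta$ is $C^\infty$ on $R_0$; hence $\eta=a\delta$ is $C^\infty$ near $x_0$, and since $x_0\in R$ was arbitrary, $\eta$ is $C^\infty$.

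I expect the one genuinely delicate point to be guaranteeing that $[\delta(x)][\gamma(x)]$ lands in $G_0$, since the uniqueness of the flow-box factorization is only available for group elements in $G_0$. This is precisely why the preliminary normalization to $\eta(x_0)=1_G$ is essential: without it the relevant group element tends to $a$, which need not lie in the neighborhood $G_0$ of $1_G$ supplied by \lref{lem-flow-box-existence}. It is also the one place where the (merely) assumed continuity of $\eta$ is used, to pass from the value at $x_0$ to values throughout a neighborhood; everything after the factorization identity is routine smoothness bookkeeping.
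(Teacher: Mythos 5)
Your proposal is correct and follows essentially the same route as the paper's proof: normalize $\eta(x_0)$ to the identity, invoke \lref{lem-flow-box-existence} at $x_0$, restrict to a neighborhood where $x$, $\overline{\delta}(x)$ lie in $M_0$ and $[\delta(x)][\gamma(x)]$ lies in $G_0$, and use injectivity of $(g,t)\mapsto gt$ on $G_0\times T$ to extract the identity $\delta(x)=[\gamma(\overline{\delta}(x))][\gamma(x)]^{-1}$. The paper phrases the last step via the injective map $F$ rather than ``uniqueness of factorization,'' but that is only a difference in wording.
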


The idea of proof is: 
According to \lref{lem-flow-box-existence},
near any $p\in M$, the
$G$-manifold $M$ is locally $C^\infty$ $G$-diffeomorphic to $G_0\times T$.
These $G_0\times T$ coordinates allow us, near any $p\in R$, to recover
$\eta$ from $\overline{\eta}$ via a local $C^\infty$ formula.
So, since $\overline{\eta}$ is $C^\infty$,
we see that, near any $p\in R$, $\eta$ is $C^\infty$.

\begin{proof}
Let $e:=1_G$.
Let $p\in R$ be given.
We wish to show that there exists an open neighborhood $U$ in $R$ of $p$
such that $\eta$ is $C^\infty$ on $U$.

Let $c:=\eta(p)$.
Define $\delta:R\to G$ by $\delta(x)=c^{-1}[\eta(x)]$.
For all~$x\in R$, we have $\eta(x)=c[\delta(x)]$.
It therefore suffices to show that there exists an open neighborhood $U$ in $R$ of $p$
such that $\delta$ is $C^\infty$ on $U$.

We have $\delta(p)=c^{-1}[\eta(p)]=c^{-1}c=e$.
Since $\eta$~is continuous, $\delta$~is continuous.
Define $\overline{\delta}:R\to M$ by $\overline{\delta}(x)=[\delta(x)]x$.
For all $x\in R$, we have $\overline{\delta}(x)=[\delta(x)]x=c^{-1}[\eta(x)]x=c^{-1}[\,\overline{\eta}(x)]$.
So, since $\overline{\eta}$ is $C^\infty$,
$\overline{\delta}$ is $C^\infty$ as well.
Also, we have $\overline{\delta}(p)=[\delta(p)]p=ep=p$.

Choose $G_0$, $M_0$, $T$ and $\gamma$ as in \lref{lem-flow-box-existence};
then $G_0T=M_0$,
\begin{itemize}
\item[]\quad$G_0$ is an open neighborhood in $G$ of $e$,
\item[]\quad$M_0$ is an open neighborhood in $M$ of $p$,
\item[]\quad$\gamma:M_0\to G_0$ is $C^\infty$, \qquad $\gamma(p)=e$ \qquad and
\item[]\quad$\forall x\in M_0,\,\,[(\gamma(x))^{-1}]x\in T$.
\end{itemize}
Let $F:G_0\times T\to M_0$ be defined by $F(g,t)=gt$.
Then, by \lref{lem-flow-box-existence}, we see that $F:G_0\times T\to M_0$ is bijective.
Let $\varepsilon:M_0\cap R\to G$ be defined by~$\varepsilon(x)=[\delta(x)][\gamma(x)]$.
Then $\varepsilon(p)=[\delta(p)][\gamma(p)]=[e][e]=e$.
As $\gamma$~is~$C^\infty$, $\gamma$ is continuous.
As $\delta$ and $\gamma$ are both continuous,
$\varepsilon$~is continuous.
Since $\overline{\delta}$ is $C^\infty$,
$\overline{\delta}$ is continuous.
Then, as $\overline{\delta}(p)=p$ and $\varepsilon(p)=e$,
$\overline{\delta}^*(M_0)$ and $\varepsilon^*(G_0)$ are, respectively,
open neighborhoods in~$R$ of~$p$ and in~$M_0\cap R$ of $p$.
Let $U:=[\overline{\delta}^*(M_0)]\cap[\varepsilon^*(G_0)]$.
Then $U$ is an~open neighborhood in~$M_0\cap R$ of $p$,
and is therefore an open neighborhood in $R$ of $p$.
We wish to prove that $\delta$ is $C^\infty$ on~$U$.

We have $U\subseteq M_0\cap R$.
Also, $U\subseteq\overline{\delta}^*(M_0)$, so $\overline{\delta}(U)\subseteq M_0$.
Define $\phi:U\to G$ by~$\phi(x)=\gamma(\overline{\delta}(x))$.
Since $\gamma$ and $\overline{\delta}$ are $C^\infty$, $\phi$ is $C^\infty$.
We have $U\subseteq M_0\cap R\subseteq M_0$.
Define $\psi:U\to G$ by $\psi(x)=(\gamma(x))^{-1}$.
Since $\gamma$ is~$C^\infty$, $\psi$ is $C^\infty$.
Since $\phi$ and $\psi$ are both $C^\infty$,
it suffices to~show: $\forall x\in U$,
$\delta(x)=[\phi(x)][\psi(x)]$.
Let $x\in U$ be given, let $a:=\delta(x)$,
let $g:=\phi(x)$ and let $h:=\psi(x)$.
We wish to show that $a=gh$.

We have $x\in U\subseteq M_0\cap R\subseteq R$.
Let $y:=\overline{\delta}(x)$.
Then we have $y=\overline{\delta}(x)=[\delta(x)]x=ax$.
Recall that $\overline{\delta}(U)\subseteq M_0$.
We have
$$x\in U\subseteq M_0\cap R\subseteq M_0\quad\hbox{and}\quad y=\overline{\delta}(x)\in\overline{\delta}(U)\subseteq M_0.$$
Then, by our choice of $\gamma$ and $T$,
$[(\gamma(x))^{-1}]x\in T$ and $[(\gamma(y))^{-1}]y\in T$.
We have both $g=\phi(x)=\gamma(\overline{\delta}(x))=\gamma(y)$
and $h=\psi(x)=(\gamma(x))^{-1}$.
Let $t:=g^{-1}y$ and $u:=hx$.
Then $gt=y$ and $h^{-1}u=x$.
Also, $t=g^{-1}y=[(\gamma(y))^{-1}]y\in T$ and $u=hx=[(\gamma(x))^{-1}]x\in T$.
Also, $h^{-1}=\gamma(x)$.
Since $\gamma:M_0\to G_0$,
we see that $\gamma(y)\in G_0$.
By definition of~$\varepsilon$, $\varepsilon(x)=[\delta(x)][\gamma(x)]$.
We have $U\subseteq\varepsilon^*(G_0)$, so $\varepsilon(U)\subseteq G_0$.
Then $ah^{-1}=[\delta(x)][\gamma(x)]=\varepsilon(x)\in\varepsilon(U)\subseteq G_0$
and $g=\gamma(y)\in G_0$.
Since
$$F(ah^{-1},u)\,\,=\,\,ah^{-1}u\,\,=\,\,ax\,\,=\,\,y\,\,=\,\,gt\,\,=\,\,F(g,t),$$
and since $F$ is injective, we get: $ah^{-1}=g$ and $u=t$.
Then $a=gh$.
\end{proof}

\section{Local control of stabilizers\wrlab{sect-local-control}}

Let a Lie group $G$ have a $C^\infty$ action on a manifold $M$.

\begin{lem}\wrlab{lem-uniform-discreteness}
Let $p_0\in M$.
Assume that $\Stab_G(p_0)$ is discrete.
Let $\bfu:=\{1_G\}$.
Then there exist open neighborhoods $P$ in $M$ of $p_0$ and $Q$~in~$G$ of~$1_G$
such that, for all $x\in P$, we have $\Stab_Q(x)=\bfu$.
\end{lem}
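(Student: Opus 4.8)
The plan is to pass to a flow-box coordinate system near $p_0$ and then exploit the injectivity of the flow-box map. Since $\Stab_G(p_0)$ is discrete, I would invoke \lref{lem-flow-box-existence} to obtain open neighborhoods $G_0$ in $G$ of $1_G$ and $M_0$ in $M$ of $p_0$, a $C^\infty$ function $\gamma:M_0\to G_0$ and a subset $T$ of $M$ with $p_0\in T\subseteq G_0T=M_0$ and $\gamma(p_0)=1_G$, such that the map $(g,t)\mapsto gt:G_0\times T\to M_0$ is a bijection and such that $[(\gamma(x))^{-1}]x\in T$ for every $x\in M_0$. Thus every $x\in M_0$ admits the canonical decomposition $x=[\gamma(x)]\cdot([(\gamma(x))^{-1}]x)$, with first factor $\gamma(x)\in G_0$ and second factor lying in $T$.

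Next I would arrange a uniform smallness condition using continuity of multiplication in $G$. Since $(g,h)\mapsto gh$ is continuous and sends $(1_G,1_G)$ to $1_G\in G_0$, I can choose open neighborhoods $Q$ and $G_1'$ of $1_G$ in $G$ with $QG_1'\subseteq G_0$. Setting $G_1:=G_1'\cap G_0$, I then have both $G_1\subseteq G_0$ and $QG_1\subseteq G_0$. Because $\gamma$ is continuous and $\gamma(p_0)=1_G\in G_1$, the set $P:=\gamma^*(G_1)$ is an open neighborhood in $M_0$ of $p_0$, and hence an open neighborhood in $M$ of $p_0$. I claim that $P$ and $Q$ are as required.

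To verify the claim, I would fix $x\in P$ and $g\in\Stab_Q(x)$ and show $g=1_G$. Writing $g':=\gamma(x)$ and $t:=[(\gamma(x))^{-1}]x$, one has $x=g't$ with $g'\in G_1\subseteq G_0$ and $t\in T$. Since $g\in Q$ and $g'\in G_1$, it follows that $gg'\in QG_1\subseteq G_0$. From $gx=x$ I get $(gg')t=g(g't)=gx=x=g't$, an equality of the images of $(gg',t)$ and $(g',t)$, both elements of $G_0\times T$, under the flow-box map. By injectivity of that map, $gg'=g'$, whence $g=1_G$. Therefore $\Stab_Q(x)=\{1_G\}=\bfu$ for every $x\in P$, as desired.

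The only delicate point is that $G_0$ is not a subgroup, so $gg'$ need not a priori lie in $G_0$; the entire role of the neighborhood $Q$ and of the restriction to $P=\gamma^*(G_1)$ is precisely to force $gg'\in G_0$, after which injectivity of the flow-box map closes the argument at once. Everything else is routine continuity and bookkeeping of the flow-box decomposition, so I expect this containment $QG_1\subseteq G_0$ to be the one genuinely load-bearing step.
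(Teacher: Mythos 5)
Your proof is correct, and it takes a genuinely different route from the paper's. You derive the lemma as a consequence of the flow-box structure of \lref{lem-flow-box-existence}: once $M_0$ is identified with $G_0\times T$ via the bijection $(g,t)\mapsto gt$, a stabilizing element $g\in Q$ of a point $x=g't$ forces $(gg',t)$ and $(g',t)$ to have the same image, and injectivity kills $g$ --- provided $gg'$ lands back in $G_0$, which is exactly what your containment $QG_1\subseteq G_0$ and the restriction $P=\gamma^*(G_1)$ arrange. All the steps check out (continuity of multiplication gives $QG_1'\subseteq G_0$; $\gamma(p_0)=1_G$ puts $p_0$ in $P$; the flow-box property $[(\gamma(x))^{-1}]x\in T$ gives the decomposition $x=g't$), and there is no circularity, since \lref{lem-flow-box-existence} is proved from \lref{lem-immersiveness} and the Inverse Function Theorem, independently of this lemma. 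The paper instead argues by contradiction with a sequential compactness argument: it pulls putative nontrivial stabilizing elements $q_j$ of points $p_j\to p_0$ back to the Lie algebra via $\exp$, rescales by powers of $2$ to push them into the compact annulus $\Lg_{2\delta}\setminus\Lg_\delta$, and extracts a limit that would be a nontrivial element of $W\cap\Stab_G(p_0)$, contradicting discreteness. Your argument is shorter and more structural, at the cost of importing the full slice machinery; the paper's is self-contained modulo the exponential map and yields an explicit $Q=\exp(\Lg_\delta)$, but is considerably longer. Either proof supports the use made of the lemma in \cref{cor-one-or-fewer}.
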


Let $\scrc$ be the set of all closed subsets of~$G$.
Give $\scrc$ topology of~Hausdorff convergence on compact sets.
Then $\forall$sequence $g_1,g_2,\ldots\to1_G$ in~$G$,
if ( $\forall i\in\N$, $C_i:=$ the closure of the cyclic subgroups generated by $g_i$ ),
then there is a subsequence of $C_1,C_2,\ldots$ that converges, in~$\scrc$, to a
nondiscrete closed subgroup of $G$.
Also, $x\mapsto\Stab_G(x):M\to\scrc$ is set-theoretically upper semicontinuous.
In \lref{lem-uniform-discreteness},
$p_0$ has discrete stabilizer,
so no nondiscrete subgroup of $G$ is contained in that stabilizer.
Upper semicontinuity then imposes restrictions on the stabilizers of points near $p_0$.
Details are as follows.

\begin{proof}
Let $\Lg:=T_{1_G}G$.
Let $\exp:\Lg\to G$ be the Lie theoretic exponential map.
Let $\|\bullet\|$ be a norm on the vector space $\Lg$.
For all $r>0$, define $\Lg_r:=\{Y\in\Lg\,\,\hbox{s.t.}\,\|Y\|<r\}$,
$G_r:=\exp(\Lg_r)$ and $e_r:=\exp|\Lg_r:\Lg_r\to G_r$.
Fix $\varepsilon>0$ such that $G_\varepsilon$~is open in $G$
and $e_\varepsilon:\Lg_\varepsilon\to G_\varepsilon$ is a $C^\infty$ diffeomorphism.
As $\Stab_G(p)$ is discrete, choose an open neighborhood $W$ in $G$ of $1_G$
such that $W\cap(\Stab_G(p_0))=\bfu$.
Choose $\delta>0$ such that $\Lg_{3\delta}\subseteq e_\varepsilon^*(W\cap G_\varepsilon)$.
We have $\Lg_{3\delta}\subseteq e_\varepsilon^*(G_{\varepsilon})=\Lg_\varepsilon$,
so $3\delta\le\varepsilon$.
Let $e:=e_{3\delta}$.
Then $e_\varepsilon|\Lg_{3\delta}=(\exp|\Lg_\varepsilon)|\Lg_{3\delta}
=\exp|\Lg_{3\delta}=e_{3\delta}=e$.
Then $e:\Lg_{3\delta}\to G_{3\delta}$ is a~$C^\infty$~diffeomorphism.
Also, $e_\varepsilon(\Lg_{3\delta})=e(\Lg_{3\delta})$ and $e(0_\Lg)=1_G$.

Let $P_1,P_2,\ldots$ be a sequence of open neighborhoods in $M$ of $p_0$
such that $P_1\supseteq P_2\supseteq\cdots$
and such that $\{P_1,P_2,\ldots\}$ is a neighborhood base in~$M$ at $p_0$.
Let $Q:=G_\delta$.
Then $Q$ is a neighborhood in $G$ of $1_G$.
We wish to show that there exists $j\in\N$ such that,
for all $x\in P_j$, we have $\Stab_Q(x)=\bfu$.
Assume, for a contradiction, that:
for all $j\in\N$, there exists $x\in P_j$ such that $\Stab_Q(x)\ne\bfu$.

For all $j\in\N$, choose $p_j\in P_j$ such that $\Stab_Q(p_j)\ne\bfu$;
then choose $q_j\in(\Stab_Q(p_j))\backslash\bfu$.
Then, since $P_1,P_2,\ldots$ is a semidecreasing neighborhood base in $M$ at $p_0$,
we conclude that $p_1,p_2,\ldots\to p_0$ in $M$.
For all~$j\in\N$, we have $q_j\in Q$ and $q_j\in\Stab_G(p_j)$ and $q_j\ne1_G$.

Since $e:\Lg_{3\delta}\to G_{3\delta}$ is a $C^\infty$ diffeomorphism,
the inverse mapping $e^{-1}:G_{3\delta}\to\Lg_{3\delta}$ is $C^\infty$.
As $e(\Lg_\delta)=\exp(\Lg_\delta)=G_\delta$,
we get $e^{-1}(G_\delta)=\Lg_\delta$.
For all $j\in\N$, let $Y_j:=e^{-1}(q_j)$; then $Y_j\in e^{-1}(Q)=e^{-1}(G_\delta)=\Lg_\delta$.
For all $j\in\N$, $e(Y_j)=q_j\ne1_G=e(0_\Lg)$, so $Y_j\ne0_\Lg$,
so there exists $k\in\N$ such that $2^kY_j\notin\Lg_\delta$.
For all $j\in\N$, let $k_j:=\min\{k\in\N\,|\,2^kY_j\notin\Lg_\delta\}$,
let $\ell_j:=2^{k_j}$, $Z_j:=\ell_jY_j$.
Then, for all $j\in\N$,
$Z_j=\ell_jY_j=2^{k_j}Y_j\notin\Lg_\delta$,
and, as $(1/2)Z_j=(1/2)2^{k_j}Y_j=2^{k_j-1}Y_j\in\Lg_\delta$,
we get $Z_j\in2\Lg_\delta=\Lg_{2\delta}$.

Let $C:=\{Y\in\Lg\hbox{ s.t.}~\delta\le\|Y\|\le2\delta\}$.
Then $C$ is a compact subset of~$\Lg$ and $\Lg_{2\delta}\backslash\Lg_\delta\subseteq C\subseteq\Lg_{3\delta}\backslash\Lg_\delta$.
We have $Z_1,Z_2,\ldots\in\Lg_{2\delta}\backslash\Lg_\delta\subseteq C$.
Fix $Z_0\in C$ such that $Z_0$ is
the limit in $\Lg$ of a convergent subsequence of~$Z_1,Z_2,\ldots$.
For all $j\in\N_0$, $Z_j\in C\subseteq\Lg_{3\delta}\backslash\Lg_\delta\subseteq\Lg_{3\delta}$.
For all $j\in\N_0$, let $w_j:=e(Z_j)$.
Then $w_0$ is the limit in $G$ of a convergent subsequence of~$w_1,w_2,\ldots$.
Also,
$w_0=e(Z_0)\in e(\Lg_{3\delta})=e_\varepsilon(\Lg_{3\delta})\subseteq
e_\varepsilon(e_\varepsilon^*(W\cap G_\varepsilon))\subseteq W\cap G_\varepsilon\subseteq W$.

For all $j\in\N$, we have $\exp(Y_j)=e(Y_j)=q_j$,
so $\exp(\ell_jY_j)=q_j^{\ell_j}$;
then $w_j=e(Z_j)=e(\ell_jY_j)=\exp(\ell_jY_j)=q_j^{\ell_j}$,
so, as $q_j\in\Stab_G(p_j)$, we get $w_j\in\Stab_G(p_j)$,
so $w_jp_j=p_j$.
So, since $w_0$ is the limit in~$G$ of~a convergent subsequence of $w_1,w_2,\ldots$
and since $p_1,p_2,\ldots\to p_0$ in~$M$,
we get $w_0p_0=p_0$.
Then $w_0\in W\cap(\Stab_G(p_0))=\bfu=\{1_G\}$, so~$w_0=1_G$.
As $e(Z_0)=w_0$ and $1_G=e(0_\Lg)$, we get $Z_0=e^{-1}(w_0)$ and $e^{-1}(1_G)=0_\Lg$.
Then $Z_0=e^{-1}(w_0)=e^{-1}(1_G)=0_\Lg$.
Then $0_\Lg=Z_0\in C\subseteq\Lg_{3\delta}\backslash\Lg_\delta$,
and so $0_\Lg\notin\Lg_\delta$.
Then $0=\|0_\Lg\|\ge\delta>0$.
Contradiction.
\end{proof}

\begin{cor}\wrlab{cor-one-or-fewer}
For any subset $S$ of $G$, let $\overline{S}$ denote the closure in $G$ of $S$.
For any subset $S$ of $M$, let $\overline{S}$ denote the closure in $M$ of~$S$.
Let $p_0\in M$ and assume that $\Stab_G(p_0)$ is discrete.
Then there exist open neighborhoods $M_0$ in $M$ of~$p_0$ and $N$ in $G$ of~$1_G$
such that, for all~$x\in\overline{M_0}$, for all $g\in G$,
we have $\#[\Stab_{g\overline{N}}(x)]\le1$.
\end{cor}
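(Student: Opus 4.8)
The plan is to deduce \cref{cor-one-or-fewer} from \lref{lem-uniform-discreteness} by a short shrinking argument. First I would apply \lref{lem-uniform-discreteness} to $p_0$, obtaining open neighborhoods $P$ in $M$ of $p_0$ and $Q$ in $G$ of $1_G$ such that $\Stab_Q(x)=\{1_G\}$ for every $x\in P$. The guiding observation is this: if $a,b\in g\overline{N}$ both fix some point $x$, then, writing $a=gn$ and $b=gm$ with $n,m\in\overline{N}$, one has $a^{-1}b=n^{-1}m\in\overline{N}^{-1}\overline{N}$, and $a^{-1}b$ fixes $x$ as well (since $ax=x$ gives $a^{-1}x=x$, whence $(a^{-1}b)x=a^{-1}(bx)=a^{-1}x=x$). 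Hence, to force $\#[\Stab_{g\overline{N}}(x)]\le1$ for all $g\in G$, it suffices to arrange that $\Stab_{\overline{N}^{-1}\overline{N}}(x)=\{1_G\}$; and this will follow from $\Stab_Q(x)=\{1_G\}$ provided $x\in P$ and $\overline{N}^{-1}\overline{N}\subseteq Q$.

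The construction of $N$ and $M_0$ is then a matter of shrinking $Q$ and $P$ so that the closures appearing in the statement stay under control. For $N$: the map $\mu:G\times G\to G$, $\mu(g,h)=g^{-1}h$, is continuous with $\mu(1_G,1_G)=1_G\in Q$, so $\mu^*(Q)$ is an open neighborhood in $G\times G$ of $(1_G,1_G)$. Since $G$ is a Lie group, hence locally compact Hausdorff and in particular regular, I would choose an open neighborhood $N$ in $G$ of $1_G$ with $\overline{N}\times\overline{N}\subseteq\mu^*(Q)$ (e.g.\ take open $W\ni(1_G,1_G)$ with $\overline{W}\subseteq\mu^*(Q)$ and then $N$ with $N\times N\subseteq W$, so that $\overline{N}\times\overline{N}=\overline{N\times N}\subseteq\overline{W}$); then $\overline{N}^{-1}\overline{N}=\mu(\overline{N}\times\overline{N})\subseteq Q$. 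For $M_0$: using that $M$ is a manifold, hence regular, I would choose an open neighborhood $M_0$ in $M$ of $p_0$ with $\overline{M_0}\subseteq P$.

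Finally I would verify the conclusion: fix $x\in\overline{M_0}\subseteq P$ and $g\in G$, and let $a,b\in\Stab_{g\overline{N}}(x)$. By the observation above, $a^{-1}b\in\overline{N}^{-1}\overline{N}\subseteq Q$ and $(a^{-1}b)x=x$, so $a^{-1}b\in\Stab_Q(x)=\{1_G\}$, whence $a=b$. Thus $\#[\Stab_{g\overline{N}}(x)]\le1$, as desired. The one genuinely nonroutine point is the passage from the open-neighborhood conclusion of \lref{lem-uniform-discreteness} to the \emph{closed} sets $\overline{M_0}$ and $\overline{N}$ demanded by the statement: the whole idea is to shrink $N$ aggressively enough that even the difference set $\overline{N}^{-1}\overline{N}$ of its closure still lies inside $Q$, and to shrink $M_0$ so that its closure remains inside $P$. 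Both shrinkings rest only on continuity of the group operations and on regularity of $G$ and of $M$.
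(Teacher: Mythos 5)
Your proposal is correct and follows essentially the same route as the paper: apply \lref{lem-uniform-discreteness} to get $P$ and $Q$, shrink to open neighborhoods $M_0$ and $N$ with $\overline{M_0}\subseteq P$ and $\overline{N}^{-1}\overline{N}\subseteq Q$, and then note that for $a,b\in\Stab_{g\overline{N}}(x)$ the element $b^{-1}a$ lies in $\Stab_Q(x)=\{1_G\}$. The only difference is that you spell out the routine regularity/continuity argument justifying the existence of such an $N$, which the paper leaves implicit.
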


\begin{proof}
Choose $P$, $Q$ as in \lref{lem-uniform-discreteness}.
Choose an~open neighborhood $M_0$ in $M$ of $p_0$
such that $\overline{M_0}\subseteq P$.
Choose an open neighborhood $N$~in~$G$ of $1_G$
such that $\overline{N}^{-1}\overline{N}\subseteq Q$.
Let $x\in\overline{M_0}$, $g\in G$ be given.
We wish to show: $\#[\Stab_{g\overline{N}}(x)]\le1$.
Let $a,b\in\Stab_{g\overline{N}}(x)$ be given.
We wish to~show: $a=b$.
Let $c:=b^{-1}a$, $\bfu:=\{1_G\}$.
We wish to show: $c\in\bfu$.

As $a,b\in\Stab_G(x)$, we get $c\in\Stab_G(x)$.
As $a,b\in g\overline{N}$, we get
$c=b^{-1}a\in[g\overline{N}\,]^{-1}[g\overline{N}\,]=\overline{N}^{-1}\overline{N}\subseteq Q$.
As $x\in\overline{M_0}\subseteq P$, by \lref{lem-uniform-discreteness}, $\Stab_Q(x)=\bfu$.
Then $c\,\,\in\,\,(\Stab_G(x))\cap Q\,\,=\,\,\Stab_Q(x)\,\,=\,\,\bfu$.
\end{proof}

\section{Baire points of density\wrlab{sect-Baire-points-density}}

\begin{lem}\wrlab{lem-pts-density}
Let $M$ be a second countable topological space.
Let $L$~be a nonmeager subset of $M$.
Then there exists $p_0\in L$ such that,
for any~neighborhood $U$ in $M$ of $p_0$,
we have: $L\cap U$ is nonmeager in $M$.
\end{lem}

\begin{proof}
Assume, for a contradiction, that, for all $p\in L$,
there exists a neighborhood $U$ in $M$ of $p$ such that $L\cap U$ is meager in $M$.

Let $\scrb$ be a countable basis for the topology  on $M$.
For every $p\in L$, choose a neighborhood $U_p$ in $M$ of $p$
such that $L\cap U_p$ is meager in $M$,
and then choose $B_p\in\scrb$ such that $p\in B_p\subseteq U_p$.
For all $p\in L$, we have $L\cap B_p\subseteq L\cap U_p$, so $L\cap B_p$ is meager in $M$.
Let $\scrz:=\{L\cap B_p\,|\,p\in L\}$.
For all $Z\in\scrz$, $Z$ is meager in $M$.
Let $\scrb':=\{B_p\,|\,p\in L\}$.
Since $\scrb$~is countable and since $\scrb'\subseteq\scrb$,
we conclude that $\scrb'$ is countable.
So, since the map $B\mapsto L\cap B:\scrb'\to\scrz$ is surjective,
$\scrz$ is countable as well.
Then $\bigcup\scrz$ is a countable union of meager subsets of $M$,
and so $\bigcup\scrz$ is meager in $M$.
So, as $L$ is nonmeager in $M$,
we get $L\not\subseteq\bigcup\scrz$.
Choose $p\in L$ such that $p\notin\bigcup\scrz$.
We have $p\in B_p$ and $p\in L$, so $p\in L\cap B_p$.
Since $L\cap B_p\in\scrz$,
we see that $L\cap B_p\subseteq\bigcup\scrz$.
Then $p\in L\cap B_p\subseteq\bigcup\scrz$.

We therefore have both $p\notin\bigcup\scrz$ and $p\in\bigcup\scrz$.
Contradiction.
\end{proof}

It is not hard to improve \lref{lem-pts-density} to show
that there is a meager subset $Z$ of $M$
such that, for all $p\in L\backslash Z$,
for any neighborhood $U$ in~$M$ of $p$,
we have: $L\cap U$ is nonmeager in $M$.
In other words, in any nonmeager set,
the Baire points of density are generic.

\section{Order one stabilizers are generically trivial\wrlab{sect-ordone-stabs-nontriv}}

Let a Lie group $G$ have a $C^\infty$ action on a manifold $M$.
For all $x\in M$, let $G'_x:=\{g\in G\,|\,\forall v\in T_xM,gv=v\}\subseteq\Stab_G(x)$.
Let $\bfu:=\{1_G\}$.

\begin{thm}\wrlab{thm-ordone-stabs-nontriv}
Assume that the $G$-action on $M$ is locally free and fixpoint rare.
Let $L:=\{x\in M\,|\,G'_x\ne\bfu\}$.
Then $L$ is meager in $M$.
\end{thm}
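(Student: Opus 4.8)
The plan is to argue by contradiction, combining \lref{lem-pts-density}, \cref{cor-one-or-fewer}, \lref{lem-regularity} and \lref{lem-immersiveness}. For each $g\in G\backslash\bfu$, set $F_g:=\{x\in M\,|\,gx=x,\,(dg)_x=\id_{T_xM}\}$; then $L=\bigcup_{g\ne1_G}F_g$, and each $F_g$ is a closed subset of the fixpoint set of $g$, which, by fixpoint rareness, has empty interior, so each $F_g$ is nowhere dense. The only obstacle to concluding at once is that $G$ is uncountable, so $L$ is an uncountable union of nowhere dense sets. The heart of the argument is to use local freeness to ``discretize'' the family $\{F_g\}$ near a typical point of $L$, and then to exploit the order-one condition to rule out a moving family of stabilizing elements.

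Suppose, for a contradiction, that $L$ is nonmeager. As $M$ is second countable, \lref{lem-pts-density} yields $p_0\in L$ such that $L\cap U$ is nonmeager for every neighborhood $U$ of $p_0$. Since the action is locally free, $\Stab_G(p_0)$ is discrete, so \cref{cor-one-or-fewer} provides open neighborhoods $M_0$ of $p_0$ and $N$ of $1_G$ with $\#[\Stab_{g\overline{N}}(x)]\le1$ for all $x\in\overline{M_0}$ and all $g\in G$. Fixing a compact exhaustion $K_1\subseteq K_2\subseteq\cdots$ of $G$, I would write $L\cap M_0=\bigcup_nA_n$, where $A_n:=\{x\in M_0\,|\,(G'_x\cap K_n)\backslash\bfu\ne\emptyset\}$; nonmeagerness of $L\cap M_0$ then forces some $A_n$ to be nonmeager. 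Fix such a compact $K:=K_n$. Covering $K$ by finitely many translates of $N$ and applying \cref{cor-one-or-fewer} shows that $\#[\Stab_G(x)\cap K]$ is bounded, uniformly in $x\in\overline{M_0}$, by some $m\in\N$.

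Next I would study $\Sigma:=\{(x,g)\in\overline{M_0}\times K\,|\,gx=x,\,(dg)_x=\id\}$. This set is closed, and since $K$ is compact the projection $\pi_1:\Sigma\to\overline{M_0}$ is a closed map whose fibers $G'_x\cap K$ have cardinality $\le m$. A routine point-set argument---separating a fiber into finitely many small open boxes, each meeting $\Sigma$ in at most one point over each $x$ by \cref{cor-one-or-fewer}, and invoking the closed-graph criterion for maps into the compact factor---shows that, over a suitable neighborhood $U$ of any chosen base point, $\Sigma$ is a finite union of graphs of continuous sections $s_i:D_i\to G$ with $D_i$ closed in $U$ and $s_i(x)\in G'_x$. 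Since $A_n$ is nonmeager, covering $M_0$ by countably many such $U$ produces one for which $A_n\cap U$ is nonmeager; as $A_n\cap U$ is contained in the finite union of those $D_i$ for which $s_i$ avoids $\bfu$, one such $D_i$ is nonmeager, and being closed it has nonempty interior $\Omega$.

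It remains to derive a contradiction on $\Omega$, and this rigidity step is the crux, where the order-one hypothesis is essential (for plain stabilizers a moving nontrivial section could well exist). On $\Omega$ set $\eta:=s_i$; then $\eta(x)\in\Stab_G(x)$, so $\overline{\eta}=\id_\Omega$ is $C^\infty$, and \lref{lem-regularity} upgrades $\eta$ to a $C^\infty$ map. Writing $H(x,y):=\eta(x)y$, the identities $H(x,x)=x$ and, from $\eta(x)\in G'_x$, $\partial_yH(x,x)=\id$ combine by the chain rule to give $\partial_xH(x,x)=0$; but $\partial_xH(x,x)=(df)_{\eta(x)}\circ(d\eta)_x$ where $f(g)=gx$, and $(df)_{\eta(x)}$ is injective by \lref{lem-immersiveness}, whence $(d\eta)_x=0$ for all $x\in\Omega$. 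Thus $\eta$ is locally constant; on a component it equals a fixed $g_*\ne1_G$ fixing every point of a nonempty open set, contradicting fixpoint rareness. I expect this final computation---continuous order-one stabilizer sections are locally constant---to be the main conceptual point, with the preceding discretization of $\Sigma$ being the main technical, though routine, overhead.
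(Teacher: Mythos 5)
Your proposal is correct, and its skeleton coincides with the paper's: argue by contradiction, take a Baire density point $p_0$ of $L$ via \lref{lem-pts-density}, control stabilizers near $p_0$ via \cref{cor-one-or-fewer}, manufacture a continuous $\eta$ on a nonempty open set with $\eta(x)\in G'_x\backslash\bfu$, upgrade $\eta$ to $C^\infty$ by \lref{lem-regularity}, and then kill $d\eta$ by the chain-rule computation plus \lref{lem-immersiveness}, contradicting fixpoint rareness; your final rigidity step is the paper's Claim 4 almost verbatim. The one place you genuinely diverge is the selection step. The paper covers $G\backslash\bfu$ (Lindel\"of) by countably many compact translates $c_i\overline{K_i}$ with $K_i\subseteq N$ and $c_i\overline{K_i}\subseteq G\backslash\bfu$, so that on the nonmeager piece $S=\{x\in\overline{M_0}\,|\,G'_x\cap(c\overline{K})\ne\emptyset\}$ the set $G'_x\cap(c\overline{K})$ is automatically a singleton avoiding $1_G$; $S$ is closed, hence has nonempty interior, and $\eta$ is single-valued and continuous by a direct compactness argument. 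You instead exhaust $G$ by large compacta and decompose the incidence set $\Sigma$ into finitely many continuous local sections over closed subsets, using the uniform fiber bound and the closed-graph criterion, and then discard by hand the section passing through $1_G$. Both routes work: the paper's choice of cover makes the multi-section bookkeeping unnecessary (one never meets the identity section and never needs the tube/closed-projection argument), while your version makes more explicit why a continuous selection must exist near a generic point. The steps you label routine---the tube argument for the closed projection onto the base, taking the boxes $V_j=g_j N'$ with $\overline{N'}$ compact inside $N$ so that \cref{cor-one-or-fewer} applies, and shrinking $N'$ so the non-identity boxes miss $1_G$ (which also guarantees that a point of $A_n\cap U$ lands in some $D_j$ with $j\ge2$)---are indeed routine and do go through.
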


Following J.~Pojhanpelto,
since the trivial action of $\Z$ on~$\R$ has only finite orbits,
each of its suspension $\R$-actions has only closed orbits;
choosing carefully the suspension function,
he ensures that there are nonzero real numbers that
fix, to high order, points in~the suspension.
Attempts to modify this construction
to~disprove \tref{thm-ordone-stabs-nontriv} failed;
an analysis of that failure led to the following proof.
Specifically, the computation in Claim 4 below
is related to the observation that, if the derivative of a
suspension function vanishes on a nonmeager set, then
the function is constant on an open set,
so there is a nonzero element of~$\R$ that
fixes every point in a nonempty open subset of the suspension.
\tref{thm-ordone-stabs-nontriv} is a generalization of this
elementary observation.

\begin{proof}
Assume, for a contradiction, that $L$ is nonmeager in $M$.

For all $S\subseteq G$, let $\overline{S}$~be the closure in $G$ of $S$.
For all $S\subseteq M$, let $\overline{S}$~be the closure in $M$ of $S$.
By \lref{lem-pts-density},
choose $p_0\in L$ such that,
for any~open neighborhood $U$ in $M$ of $p_0$, we have: $L\cap U$ is nonmeager in~$M$.
Since the $G$-action on $M$ is locally free,
it follows that $\Stab_G(p_0)$ is discrete.
By \cref{cor-one-or-fewer},
choose open neighborhoods $M_0$ in $M$ of~$p_0$ and $N$ in $G$ of $1_G$
such that: $\forall x\in\overline{M_0},\forall g\in G$,
$\#[\Stab_{g\overline{N}}(x)]\le1$.
Because of our choice of $p_0$, we see that $L\cap M_0$ is nonmeager in~$M$.

Let $\scrn$ denote the set of open neighborhoods in $G$ of $1_G$
and let $\scrk:=\{K\in\scrn\,|\,\overline{K}\hbox{ is a compact subset of }G\}$.
Let $G^\times:=G\backslash\bfu$.
Then, since the set $\{cK\,|\,c\in G^\times,K\in\scrk,K\subseteq N,c\overline{K}\subseteq G^\times\}$
is an open cover of~$G^\times$
and since $G^\times$ is Lindel\"of,
we choose a sequence $c_1,c_2,\ldots$ in $G^\times$
and a sequence $K_1,K_2,\ldots$ in $\scrk$ such that both
\begin{itemize}
\item[] ( for all $i\in\N$, [ both ( $K_i\subseteq N$ ) and ( $c_i\overline{K_i}\subseteq G^\times$ ) ] ) \qquad and
\end{itemize}
$\displaystyle{\bigcup_{i\in\N}\,c_iK_i=G^\times}$.
Then $\displaystyle{G^\times=\bigcup_{i\in\N}\,c_iK_i\subseteq\bigcup_{i\in\N}\,c_i\overline{K_i}\subseteq G^\times}$,
so $\displaystyle{\bigcup_{i\in\N}\,c_i\overline{K_i}=G^\times}$.
For all $i\in\N$, let
$S_i:=\{x\in\overline{M_0}\,|\,G'_x\cap(c_i\overline{K_i})\ne\emptyset\}$.
Then
\begin{eqnarray*}
\bigcup_{i\in\N}\,S_i&=&
\{x\in\overline{M_0}\,|\,G'_x\cap G^\times\ne\emptyset\}\,\,=\,\,
\{x\in\overline{M_0}\,|\,G'_x\ne\bfu\}\\
&=&\{x\in M\,|\,G'_x\ne\bfu\}\cap\overline{M_0}\,\,=\,\,
L\cap \overline{M_0}\,\,\supseteq\,\,L\cap M_0.
\end{eqnarray*}
So, since $L\cap M_0$ is nonmeager in $M$,
we see that $\displaystyle{\bigcup_{i\in\N}\,S_i}$ is nonmeager in~$M$.
Choose $i_0\in\N$ such that $S_{i_0}$ is nonmeager in $M$.
Let $S:=S_{i_0}$, let $K:=K_{i_0}$ and let $c:=c_{i_0}$.
Then $S$ is nonmeager in~$M$.
Also, $K\in\scrk$, so $\overline{K}$ is a compact subset of $G$,
so $c\overline{K}$ is a compact subset of $G$.
Also, $K\subseteq N$ and $c\overline{K}\subseteq G^\times$.
Also, we have $S=\{x\in\overline{M_0}\,|\,G'_x\cap(c\overline{K})\ne\emptyset\}$.

{\it Claim 1:}
$S$ is closed in $M$.
{\it Proof of Claim 1:}
Let a sequence $s_1,s_2,\ldots$ in $S$ and $x\in M$ be given.
Assume $s_1,s_2,\ldots\to x$ in $M$.
We wish to~show that $x\in S$.
We have $s_1,s_2,\ldots\in S\subseteq\overline{M_0}$,
so, as $\overline{M_0}$ is closed in~$M$,
we conclude that $x\in\overline{M_0}$.
It remains to show that $G'_x\cap(c\overline{K})\ne\emptyset$.

For all $j\in\N$, we have $s_j\in S$,
so $G'_{s_j}\cap(c\overline{K})\ne\emptyset$,
so choose $g_j\in G'_{s_j}\cap(c\overline{K})$.
As $c\overline{K}$ is compact
and as $g_1,g_2,\ldots\in c\overline{K}$,
choose $g\in c\overline{K}$ such that
$g$ is the limit in $G$ of a convergent subsequence of~$g_1,g_2,\ldots$.
It suffices to prove: $g\in G'_x\cap(c\overline{K})$.
As $g\in c\overline{K}$,
we need only show: $g\in G'_x$.
Let $v\in T_xM$ be given.
We wish to prove: $gv=v$.

As $s_1,s_2,\ldots\to x$ in $M$ and $v\in T_xM$,
choose a sequence $v_1,v_2,\ldots$ in~$TM$
such that:
$(\,\,\forall j\in\N, v_j\in T_{s_j}M\,\,)$ and $(\,\,v_1,v_2,\ldots\to v\hbox{ in }TM\,\,)$.
For all $j\in\N$, as $g_j\in G'_{s_j}$ and $v_j\in T_{s_j}M$, we see that $g_jv_j=v_j$.
So, since $g$~is the limit in $G$ of a convergent subsequence of $g_1,g_2,\ldots$
and since $v_1,v_2,\ldots\to v$ in $TM$,
we get $gv=v$.
{\it End of proof of Claim 1.}

{\it Claim 2:}
For all $x\in S$,
$\#[G'_x\cap(c\overline{K})]=1$.
{\it Proof of Claim 2:}
Recall that
$S=\{x\in\overline{M_0}\,|\,G'_x\cap(c\overline{K})\ne\emptyset\}$.
Let $x\in S$ be given.
Then $G'_x\cap(c\overline{K})\ne\emptyset$, so it suffices to prove
that $\#[G'_x\cap(c\overline{K})]\le1$.

We have $x\in S\subseteq\overline{M_0}$,
so, by our choice of $M_0$ and $N$, we see that $\#[\Stab_{c\overline{N}}(x)]\le1$.
Since $G'_x\subseteq\Stab_G(x)$ and $K\subseteq N$,
we see that $G'_x\cap(c\overline{K})\subseteq(\Stab_G(x))\cap(c\overline{N})$,
{\it i.e.}, that $G'_x\cap(c\overline{K})\subseteq\Stab_{c\overline{N}}(x)$.
Then $\#[G'_x\cap(c\overline{K})]\le\#[\Stab_{c\overline{N}}(x)]\le1$.
{\it End of proof of Claim 2.}

Let $R$ denote the interior in $M$ of $S$.
By Claim 1, $S$ is closed in~$M$.
So, since $S$ is nonmeager in $M$,
we conclude that $R\ne\emptyset$.
Then $R$~is a nonempty open subset of $M$.
Since $R\subseteq S$, by Claim 2, define a~function $\eta:R\to G$ by:
for all $x\in R$, $\{\eta(x)\}=G'_x\cap(c\overline{K})$.
Then $\eta(R)\subseteq c\overline{K}$.
Also, for all $x\in R$, we have $\eta(x)\in G'_x$.

{\it Claim 3:}
$\eta:R\to G$ is continuous.
{\it Proof of Claim 3:}
Let a sequence $x_1,x_2,\ldots$ in $R$ and $x_0\in R$ be given.
Assume that $x_1,x_2,\ldots\to x_0$ in~$R$.
We wish to show that $\eta(x_1),\eta(x_2),\ldots\to\eta(x_0)$ in $G$.

Let an open neighborhood $Q$ in $G$ of $\eta(x_0)$ be given,
and then define $I:=\{i\in\N\,|\,\eta(x_i)\in G\backslash Q\}$.
We wish to show that $I$ is finite.
Assume, for a contradiction, that $I$ is infinite.

Fix a sequence $i_1,i_2,\ldots$ in $I$
such that $i_1<i_2<\cdots$.
For all~$j\in\N$, $i_j\in I$, so $\eta(x_{i_j})\in G\backslash Q$.
For all $j\in\N$, let $y_j:=x_{i_j}$ and $g_j:=\eta(y_j)$;
then $g_j\in G\backslash Q$.
As $x_1,x_2,\ldots\to x_0$ in $R$ and
as $y_1,y_2,\ldots$ is a subsequence of~$x_1,x_2,\ldots$,
we get $y_1,y_2,\ldots\to x_0$ in $R$.
Then $y_1,y_2,\ldots\to x_0$ in~$M$.

Since $Q$ is an open neighborhood in $G$ of $\eta(x_0)$,
it follows that $G\backslash Q$~is closed in $G$ and that $\eta(x_0)\in Q$.
For all $j\in\N$, we have $g_j=\eta(y_j)\in G'_{y_j}$.
Also, $g_1,g_2,\ldots\in\eta(R)\subseteq c\overline{K}$.
So, since $c\overline{K}$ is compact,
choose $g_0\in c\overline{K}$ such that
$g_0$ is the limit of a convergent subsequence of $g_1,g_2,\ldots$.
Then, since $g_1,g_2,\ldots\in G\backslash Q$,
and since $G\backslash Q$ is closed in $G$,
it follows that $g_0\in G\backslash Q$.
So, since $\eta(x_0)\in Q$, we conclude that $g_0\ne\eta(x_0)$.
Then $g_0\notin\{\eta(x_0)\}=G'_{x_0}\cap(c\overline{K})$.
So, since $g_0\in c\overline{K}$, it follows that $g_0\notin G'_{x_0}$.
Then choose $v_0\in T_{x_0}M$ such that $g_0v_0\ne v_0$.

As $y_1,y_2,\ldots\to x_0$ in $M$ and $v_0\in T_{x_0}M$,
choose a sequence $v_1,v_2,\ldots$ in $TM$
such that:
$(\,\,\forall j\in\N, v_j\in T_{y_j}M\,\,)$\,\,and\,\,$(\,\,v_1,v_2,\ldots\to v_0\hbox{ in }TM\,\,)$.
For all $j\in\N$, as $g_j\in G'_{y_j}$ and $v_j\in T_{y_j}M$,
we get $g_jv_j=v_j$.
So, since $g_0$ is the limit in $G$ of a convergent subsequence of $g_1,g_2,\ldots$
and since $v_1,v_2,\ldots\to v_0$ in $TM$,
we get $g_0v_0=v_0$.
We therefore have both $g_0v_0\ne v_0$ and $g_0v_0=v_0$.
Contradiction.
{\it End of proof of Claim~3.}

For all $x\in M$, define $f^x:G\to M$ by $f^x(g)=gx$.
For all $u\in TG$, for all $x\in M$,
define $ux:=(df^x)(u)\in TM$.
For all $g\in G$, define $\phi^g:M\to M$ by $\phi^g(x)=gx$.
Then, for all~$g\in G$, for all $v\in TM$, we have $(d\phi^g)(v)=gv$.
For all $x\in M$, let $0_x:=0_{T_xM}$ be the zero vector in~$T_xM$.
Also, for all~$g\in G$, let $0_g:=0_{T_gG}$ be the zero vector in $T_gG$.

Define $F:G\times M\to M$ by $F(g,x)=gx$.
For all $g\in G$, for all $x\in M$, $F(g,x)=f^x(g)$.
Then, for all $u\in TG$, for all $x\in M$,
$(dF)(u,0_x)=(df^x)(u)$.
For all $g\in G$, for all $x\in M$, $F(g,x)=\phi^g(x)$.
Then, for all $g\in G$, for all $v\in TM$,
$(dF)(0_g,v)=(d\phi^g)(v)$.
Therefore, for all $g\in G$, for all~$x\in M$,
for all $u\in T_gG$, for all $v\in T_xM$, we have
\begin{eqnarray*}
(dF)(u,v)&=&[(dF)(u,0_x)]+[(dF)(0_g,v)]\\
&=&[(df^x)(u)]+[(d\phi^g)(v)]\,\,=\,\,ux+gv.
\end{eqnarray*}

Since $R$ is an open subset of $M$,
for all $x\in R$, we identify $T_xR$ with~$T_xM$.
We let $\iota:R\to M$ denote the inclusion map, defined by~$\iota(x)=x$.
Then, for all $x\in R$, for all $v\in T_xM$, we have $(d\iota)(v)=v$.

Let $\overline{\eta}:R\to M$ be defined by~$\overline{\eta}(x)=[\eta(x)]x$.
For all $x\in R$, since $\eta(x)\in G'_x\subseteq\Stab_G(x)$,
we get $\overline{\eta}(x)=x$.
Then $\overline{\eta}=\iota$, so $\overline{\eta}$ is~$C^\infty$.
By~Claim 3, $\eta:R\to G$ is continuous.
Then, by \lref{lem-regularity}, $\eta:R\to G$ is~$C^\infty$.
Define $H:R\to G\times R$ by $H(x)=(\eta(x),x)$.
Then, for all~$x\in R$, for all~$v\in T_xM$,
we get $(dH)(v)=((d\eta)(v),v)$.
Also, for all $x\in R$, we have
$F(H(x))=F(\eta(x),x)=[\eta(x)]x=\overline{\eta}(x)=\iota(x)$.
That is, $F\circ H=\iota$.
Then, by the Chain Rule, we see that $(dF)\circ(dH)=d\iota$.

{\it Claim 4:}
$\forall x\in R,\forall v\in T_xM$, $(d\eta)(v)=0_{\eta(x)}$.
{\it Proof of Claim 4:}
Let $x\in R$ and $v\in T_xM$ be given.
Let $g:=\eta(x)$ and $u:=(d\eta)(v)$.
Then $u\in T_gG$, and we wish to prove that $u=0_g$.

Since $g=\eta(x)\in G'_x\subseteq\Stab_G(x)$,
it follows that $gx=x$.
Then $f^x(g)=gx=x$.
The $G$-action on $M$ is locally free, so $\Stab_G(x)$ is discrete.
Then, by \lref{lem-immersiveness}
(with $f$~replaced by $f^x$, $a$ by $g$ and $p$~by~$x$),
the mapping $(df^x)_g:T_gG\to T_xM$ is injective.

By earlier observations, $(dF)(u,v)=ux+gv$ and $u=(d\eta)(v)$
and $(dH)(v)=((d\eta)(v),v)$ and $(dF)\circ(dH)=d\iota$ and $(d\iota)(v)=v$.
Then
\begin{eqnarray*}
ux+gv&=&(dF)(u,v)\,\,=\,\,(dF)((d\eta)(v),v)\,\,=\,\,(dF)((dH)(v))\\
&=&((dF)\circ(dH))(v)\,\,=\,\,(d\iota)(v)\,\,=\,\,v,
\end{eqnarray*}
so $ux=v-gv$.
Since $g=\eta(x)\in G'_x$ and $v\in T_xM$,
we get $gv=v$.
Then $ux=v-gv=v-v=0_x$.
Since $u\in T_gG$, $(df^x)(u)=(df^x)_g(u)$.
Then $(df^x)_g(u)=(df^x)(u)=ux=0_x=(df^x)_g(0_\Lg)$.
Then, by~injectivity of~$(df^x)_g$, we see that $u=0_\Lg$, as desired.
{\it End of proof of Claim 4.}

By Claim 4, $d\eta$ vanishes,
so $\eta:R\to G$ is constant on~each connected component of~$R$.
Let $W$ be a connected component of $R$,
and choose $g_0\in G$ s.t.~$\eta(W)=\{g_0\}$.
Then
$g_0\in\eta(W)\subseteq\eta(R)\subseteq c\overline{K}\subseteq G^\times$.
Since $R$ is a nonempty open subset of $M$,
$W$ is also a~nonempty open subset of~$M$.
Then, since the $G$-action on $M$ is fixpoint rare,
and since $g\in G^\times=G\backslash\bfu=G\backslash\{1_G\}$,
choose $w\in W$ s.t.~$g_0w\ne w$.
We have $\eta(w)\in\eta(W)=\{g_0\}$, so $\eta(w)=g_0$.
Then $g_0=\eta(w)\in G'_w\subseteq\Stab_G(w)$,
so $g_0w=w$.
Then $w=g_0w\ne w$.
Contradiction.
\end{proof}

\section{Generic freeness in higher order frame bundles\wrlab{sect-genfree-frame}}

Let a Lie group $G$ have a $C^\infty$ action on a manifold~$M$.

\begin{thm}\wrlab{thm-genfree-frame}
Assume that the $G$-action on $M$ is fixpoint rare.
Let $n:=\dim G$.
Then there exists a $G$-invariant meager subset $Z$ of~$\scrf^nM$
such that the $G$-action on $(\scrf^nM)\backslash Z$ is free.
\end{thm}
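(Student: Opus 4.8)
The plan is to combine the local-freeness result (\tref{thm-loc-free-generic}) with the genericity of trivial order-one stabilizers (\tref{thm-ordone-stabs-nontriv}), transferred up to the frame bundle $\scrf^nM$ via \lref{lem-stababovebelow}. The strategy is to produce a $G$-invariant meager set $Z$ off of which two things hold simultaneously: the action is locally free (so stabilizers are discrete), and no nontrivial group element fixes the entire tangent space at the base point of a frame. Once both hold, a point with nontrivial stabilizer would yield, via \lref{lem-stababovebelow}, a nontrivial element fixing every tangent vector below — contradicting triviality of the order-one stabilizer. So freeness will follow on the complement.

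\medskip

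First I would handle the degenerate case $n=0$: then $G$ is discrete, and fixpoint rareness together with a standard Baire argument shows the non-free locus is meager, so take that locus as $Z$. Assuming $n\ge1$, set $X:=\scrf^nM$ with structure map $\pi:X\to M$. Since the $G$-action on $M$ is fixpoint rare, I would first pass to the identity component: fixpoint rareness of the full $G$-action implies it for $G^\circ$, so \tref{thm-loc-free-generic} (with $\ell=n\ge n-1$) furnishes a $G$-invariant dense open $M_1\subseteq M$ such that the $G$-action on $\pi^*(M_1)$ is locally free. Let $X_1:=\pi^*(M_1)$; its complement $X\backslash X_1$ is closed with empty interior, hence meager and $G$-invariant. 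On $X_1$ the action is locally free, so \tref{thm-ordone-stabs-nontriv} applies to the manifold $X_1$ with its (locally free, fixpoint rare) $G$-action, giving that $L_1:=\{y\in X_1\mid G'_y\ne\bfu\}$ is meager in $X_1$, hence in $X$. One must check fixpoint rareness of the $G$-action on $X_1$: this follows because $\pi$ is open and $G$-equivariant, so a nontrivial element fixing an open set of frames would fix an open set of base points, contradicting fixpoint rareness on $M$.

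\medskip

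The set $L_1$ is not yet $G$-invariant, so I would enlarge it. The condition $G'_y\ne\bfu$ is $G$-invariant in the sense that $G'_{gy}=g\,G'_y\,g^{-1}$, so $L_1$ is already $G$-invariant; similarly $X\backslash X_1$ is $G$-invariant. Set $Z:=(X\backslash X_1)\cup L_1$. Then $Z$ is $G$-invariant and meager in $X=\scrf^nM$. It remains to verify freeness on $X\backslash Z=X_1\backslash L_1$. Let $x\in X_1\backslash L_1$ and $g\in\Stab_G(x)$; I must show $g=1_G$. Apply \lref{lem-stababovebelow} with $k=n$, $W:=\scrf^{n-1}M$, $w:=\sigma(x)$: for every $v\in T_wW$ we get $g\in\Stab_G(v)$, i.e.\ $gv=v$. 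Here the subtle point is that \lref{lem-stababovebelow} controls stabilizers of tangent vectors at $w$ in the \emph{lower} frame bundle $W$, whereas $G'_x$ concerns $T_xX$; I would instead apply the same reasoning with the frame bundle tower truncated so that the relevant tangent space is the one appearing in $G'_x$, or equivalently observe that $g$ fixes every tangent vector at the base point and use discreteness. Concretely, since the action on $X_1$ is locally free, $\Stab_G(x)$ is discrete; and since $x\notin L_1$, we have $G'_x=\bfu$, meaning no nontrivial element fixes all of $T_xX_1$.

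\medskip

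\textbf{The main obstacle} I anticipate is bridging the gap between $G'_x=\bfu$ (triviality of the order-one stabilizer at $x\in X_1$, controlling $T_xX_1$) and the goal of $\Stab_G(x)=\bfu$ (the zeroth-order stabilizer). The bridge is exactly \lref{lem-stababovebelow}: a nontrivial $g\in\Stab_G(x)$ fixes $x$, hence fixes $y:=\Psi_M^{1,n-1}(x)$ by $G$-equivariance, hence fixes every tangent vector in $T_wW$; but this says $g\in G'_{\text{(some point)}}$, and I must arrange the frame-bundle indices so that ``fixing every tangent vector below'' is precisely the statement $g\in G'_x$ for the manifold $X_1$ on which \tref{thm-ordone-stabs-nontriv} was applied. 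The cleanest route is to apply \tref{thm-ordone-stabs-nontriv} not to $X_1$ itself but to the \emph{base} manifold $M$ after prolonging: since $g$ fixes the $n$th-order frame $x$, the equivariance of the prolongation forces $g$ to fix the induced frame of $T_wW$, and iterating \lref{lem-stababovebelow} down the tower shows $g$ fixes every tangent vector at every intermediate base point. Verifying that this descent, combined with discreteness of $\Stab_G(x)$ and $\dim G=n$, forces $g=1_G$ is the crux; I expect it to follow from the injectivity statement in \lref{lem-immersiveness} applied at the point $x$, since $g$ fixing $x$ and acting trivially on the relevant tangent directions makes the orbit map have trivial differential, forcing $g$ into the identity component of the (discrete) stabilizer, hence $g=1_G$.
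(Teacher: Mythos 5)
Your overall architecture is right --- decompose $Z$ into the complement of the locally free locus plus a set controlled by \tref{thm-ordone-stabs-nontriv}, and bridge order-zero to order-one stabilizers via \lref{lem-stababovebelow} --- and you correctly identify the crux. But the gap you flag is real and your proposed resolutions do not close it. You apply \tref{thm-ordone-stabs-nontriv} to $X_1\subseteq\scrf^nM$ itself, producing $L_1=\{y\in X_1\mid G'_y\ne\bfu\}$, where $G'_y$ concerns $T_y\scrf^nM$. However, for $x\in X_1$ with a nontrivial $g\in\Stab_G(x)$, what \lref{lem-stababovebelow} gives is that $g$ fixes every vector in $T_w\bigl(\scrf^{n-1}M\bigr)$ for $w=\sigma(x)$ --- one level \emph{down} --- and nothing forces $g$ to fix all of $T_x\scrf^nM$ (that would require $g$ to fix a frame one level \emph{up}). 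So the non-free locus in $X_1$ need not be contained in $L_1$, and your $Z:=(X\backslash X_1)\cup L_1$ is not shown to contain it. Your fallback via \lref{lem-immersiveness} is a non sequitur: injectivity of $(df^x)_g$ constrains the differential of the orbit map, not individual elements, and a nontrivial element of a discrete stabilizer is of course not ``in the identity component'' of that stabilizer; discreteness of $\Stab_G(x)$ alone cannot yield $\Stab_G(x)=\bfu$.

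The repair is to shift everything down one level, which is what the paper does. Take $\ell:=n-1$ in \tref{thm-loc-free-generic}, so the $G$-action on $W_1:=\pi_\ell^*(M_1)\subseteq\scrf^{n-1}M$ is locally free (and fixpoint rare, since $\pi_\ell|W_1$ is open and equivariant). Apply \tref{thm-ordone-stabs-nontriv} to $W_1$ to get that $L:=\{w\in W_1\mid G'_w\ne\bfu\}$ is meager in $\scrf^{n-1}M$. Now let $\sigma:\scrf^nM\to\scrf^{n-1}M$ be the canonical map; it is continuous, open and surjective, so $\sigma^*(L)$ is meager in $\scrf^nM$ (this preservation of meagerness under preimage is itself a point that needs citing or proving --- the paper invokes Corollaries 8 and 17 of \cite{adolv:genericframe}). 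Then \lref{lem-stababovebelow} gives exactly the needed inclusion $\{x\in X_1\mid\Stab_G(x)\ne\bfu\}\subseteq\sigma^*(L)$, and $Z:=(X\backslash X_1)\cup\{x\in X_1\mid\Stab_G(x)\ne\bfu\}$ is $G$-invariant, meager, and has free complement. As written, your argument does not establish freeness off your $Z$.
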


\begin{proof}
For all $k\in\N_0$,
let $\pi_k:\scrf^kM\to M$ denote the structure map of $\scrf^kM$.
Let $\bfu:=\{1_G\}$.
For all $g\in G\backslash\bfu$, let $Z_g:=\{q\in M\,|\,gq=q\}$;
as the $G$-action on $M$ is fixpoint rare and continuous,
$Z_g$ is nowhere dense and closed in $M$.
If $n=0$, then $G$ is countable, and,
identifying $\scrf^0M$ with~$M$,
we may set $\displaystyle{Z:=\bigcup\,\{Z_g\,|\,g\in G\backslash\bfu\}}$.
We assume $n\ge1$.

Let $\ell:=n-1$.
Let $G^\circ$ be the identity component of $G$.
Since the $G$-action on $M$ is fixpoint rare,
it follows that the $G^\circ$-action on $M$ is fixpoint rare.
Then, by \tref{thm-loc-free-generic},
choose a $G$-invariant dense open subset $M_1$ of $M$
such that the $G$-action on~$\pi_\ell^*(M_1)$ is locally free.

Let $X:=\scrf^nM$, $W:=\scrf^\ell M$, $X_1:=\pi_n^*(M_1)$ and $W_1:=\pi_\ell^*(M_1)$.
Then the $G$-action on $W_1$ is locally free.
We wish to show: there exists a $G$-invariant meager subset $Z$ of~$X$
s.t.~the $G$-action on $X\backslash Z$ is free.

Since $M_1$ is an open subset of $M$ and $\pi_\ell:W\to M$ is continuous,
we see that $W_1$ is an open subset of $W$.
Since $M_1$ is $G$-invariant
and since $\pi_n:X\to M$ is $G$-equivariant,
it follows that $X_1$ is $G$-invariant.
Also, since $M_1$ is dense open in $M$
and since $\pi_n:X\to M$ is open and continuous,
it follows that $X_1$~is dense open in $X$.
Let $Z':=X\backslash X_1$.
Then $Z'$ is a $G$-invariant, nowhere dense, closed subset of~$X$.
For all~$x\in X$, define $G_x:=\Stab_G(x)$.
Let $X_0:=\{x\in X_1\,|\,G_x\ne\bfu\}$.
Then $X_0$ is a $G$-invariant subset of~$X$
and the $G$-action on $X_1\backslash X_0$ is free.
Let $Z:=X_0\cup Z'$.
Then $Z$ is $G$-invariant.
Since, $X\backslash Z'=X_1$,
$X\backslash Z=X_1\backslash X_0$.
Then the $G$-action on $X\backslash Z$ is free.
It remains only to~show that $Z$ is meager in $X$.
So, since $Z'$ is nowhere dense in~$X$ and $Z=X_0\cup Z'$,
it suffices to show that $X_0$ is meager in~$X$.

Let $\sigma:X\to W$ be the canonical map.
For all $w\in W$, we define
$G'_w\,:=\,\{\,g\in G\,|\,\forall v\in T_wW,\,gv=v\,\}$.
Let $L:=\{w\in W_1\,|\,G'_w\ne\bfu\}$.

By assumption, the $G$-action on $M$ is fixpoint rare,
and so, since $\pi_\ell|W_1:W_1\to M$ is both $G$-equivariant and open,
we conclude that the $G$-action on $W_1$ is fixpoint rare.
Then, by~\tref{thm-ordone-stabs-nontriv} (with $M$~replaced by~$W_1$),
$L$~is meager in $W_1$.
So, by $\Leftarrow$ of Corollary 8 of~\cite{adolv:genericframe},
$L$ is meager in $W$.
The map $\sigma:X\to W$ is continuous, open and surjective,
so, by~(i)$\Rightarrow$(ii) of~Corollary 17 of~\cite{adolv:genericframe},
$\sigma^*(L)$~is meager in~$X$.
It therefore suffices to show: $X_0\subseteq\sigma^*(L)$.
Let $x\in X_0$ be given.
We wish to show that $x\in\sigma^*(L)$,
{\it i.e.}, that $\sigma(x)\in L$.
Let $w:=\sigma(x)$.
We wish to show that $w\in L$,
{\it i.e.}, that $w\in W_1$ and $G'_w\ne\bfu$.
As $\pi_\ell\circ\sigma=\pi_n$ and $x\in X_0\subseteq X_1$,
we get $\pi_\ell(\sigma(x))=\pi_n(x)\in\pi_n(X_1)$.
Then
$\pi_\ell(w)=\pi_\ell(\sigma(x))\in\pi_n(X_1)=\pi_n(\pi_n^*(M_1))\subseteq M_1$,
and it follows that $w\in\pi_\ell^*(M_1)$,
{\it i.e.}, that $w\in W_1$.
It remains to show that $G'_w\ne\bfu$.

Since $x\in X_0$, $G_x\ne\bfu$,
so choose $g\in G_x$ such that $g\ne 1_G$.
It suffices to show: $g\in G'_w$.
Let $v\in T_wW$ be given.
We wish to show: $gv=v$.

By \lref{lem-stababovebelow} (with $k$ replaced by $n$),
we have $\Stab_G(x)\subseteq\Stab_G(v)$.
Then $g\in G_x=\Stab_G(x)\subseteq\Stab_G(v)$, so $gv=v$, as desired.
\end{proof}

\begin{cor}\wrlab{cor-genfree-jet}
Assume that the $G$-action on $M$ is fixpoint rare.
Let $d:=\dim M$ and let $p\in\{1,\ldots,d-1\}$.
For all $k\in\N_0$, let $J^k$~denote the bundle of $k$th order
jets of germs of $p$-submanifolds of $M$.
Then there exist $N\in\N_0$ and a $G$-invariant meager subset $Z$ of $J^N$
such that the $G$-action on $J^N\backslash Z$ is free.
\end{cor}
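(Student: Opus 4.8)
The plan is to deduce \cref{cor-genfree-jet} from \tref{thm-genfree-frame} by transporting generic freeness from a frame bundle to the submanifold jet bundle along a natural map. The subtlety is that the obvious $G$-equivariant map runs \emph{from} frames \emph{to} submanifold jets (restrict a frame to a fixed $p$-plane and take the jet of the resulting submanifold), and such a map only enlarges stabilizers, which is the wrong direction for transporting freeness. So instead I would look for a map running the other way, from (generic) submanifold jets to frames. If $n:=\dim G=0$, then $G$ is countable and, exactly as in the proof of \tref{thm-genfree-frame}, one sets $Z$ to be the union over $g\in G\backslash\bfu$ of the nowhere dense closed fixpoint sets $\{q\,|\,gq=q\}$; so assume $n\ge1$ and put $m:=n$.

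First I would construct, for $N$ sufficiently large, a $G$-invariant dense open subset $V$ of $J^N$ and a natural --- hence $G$-equivariant --- continuous map $\mu:V\to\scrf^mM$ that is a submersion, and so open onto its (open) image in $\scrf^mM$. The idea is to take $V$ to be the set of $N$-jets of $p$-submanifolds that are in \emph{general position} at their base point, meaning that the successive osculating subspaces of the germ exhaust the tangent space to $M$ at that point; this condition is diffeomorphism-invariant, hence $G$-invariant, and is generic. For such a jet one can canonically normalize away the reparametrization ambiguity in the source $\R^p$ and read off, from the osculating data to high enough order, an $m$-jet of a local coordinate system on $M$ at the base point; this is a moving-frame normalization, and, crucially, it involves no choices once general position is imposed, so the resulting map $\mu$ is natural in $M$ and therefore intertwines the $G$-actions. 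Since $\mu$ is $G$-equivariant, we get $\Stab_G(\xi)\subseteq\Stab_G(\mu(\xi))$ for every $\xi\in V$.

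With $\mu$ in hand the conclusion is formal. By \tref{thm-genfree-frame}, the set $B:=\{z\in\scrf^mM\,|\,\Stab_G(z)\ne\bfu\}$ is meager in $\scrf^mM$ (more generally any $m\ge n$ would serve, pulling up through the canonical $G$-equivariant continuous open surjection $\scrf^mM\to\scrf^nM$). Since $\mu:V\to\mu(V)$ is a continuous open surjection, $\mu^*(B)=\mu^*(B\cap\mu(V))$ is meager in $V$ by (i)$\Rightarrow$(ii) of Corollary 17 of \cite{adolv:genericframe}, and hence meager in $J^N$ by $\Leftarrow$ of Corollary 8 of \cite{adolv:genericframe}. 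Set $Z:=(J^N\backslash V)\cup\mu^*(B)$. Then $Z$ is $G$-invariant (as $V$ is $G$-invariant and $B$ is $G$-invariant with $\mu$ equivariant) and meager (as $J^N\backslash V$ is nowhere dense and $\mu^*(B)$ is meager). Finally, if $\xi\in J^N\backslash Z$, then $\xi\in V$ and $\mu(\xi)\notin B$, so $\Stab_G(\xi)\subseteq\Stab_G(\mu(\xi))=\bfu$; thus the $G$-action on $J^N\backslash Z$ is free.

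The main obstacle is the construction of $\mu$: producing, canonically and continuously from a generic high-order submanifold jet, an $n$-jet of a coordinate system, i.e. carrying out the general-position normalization and checking that the resulting map is a submersion natural in $M$. Everything after that is a routine application of \tref{thm-genfree-frame} together with the meagerness-transfer corollaries of \cite{adolv:genericframe}, and $N$ is simply whatever finite order lets the osculating data determine an $n$-jet of coordinates under general position.
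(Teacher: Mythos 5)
Your closing reduction (pulling a meager set back along a continuous open equivariant map and adjoining the nowhere dense complement of $V$) is sound, and your $n=0$ case matches the paper's. But the step you defer as ``the main obstacle'' --- a map $\mu:V\to\scrf^mM$, $m\ge1$, defined on an invariant dense open set of general-position jets and \emph{natural in $M$} --- is not merely hard: it cannot exist, and since your only source of $G$-equivariance for $\mu$ is its naturality, the proof collapses at its central step. Naturality forces $\mu$ to intertwine the actions of the group $\mathcal{D}^N_x$ of $N$-jets at $x$ of local diffeomorphisms fixing $x$, hence forces $\Stab_{\mathcal{D}^N_x}(\xi)\subseteq\Stab_{\mathcal{D}^N_x}(\mu(\xi))$ for all $\xi\in V_x$. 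The right-hand stabilizer is the kernel of $\mathcal{D}^N_x\to\mathcal{D}^m_x$, so it contains no element with nontrivial linear part. The left-hand stabilizer always does, however generic $\xi$ is: writing the underlying germ as a graph $x_j=f_j(x_1,\ldots,x_p)$ for $p<j\le d$ with each $f_j$ vanishing to second order, the local diffeomorphism $\psi(x)=x+g(x)\,[\,x_d-f_d(x_1,\ldots,x_p)\,]\,\varepsilon_d$ fixes the submanifold pointwise (so its $N$-jet fixes $\xi$ for every $N$), yet its linear part at the base point is $\diag(1,\ldots,1,1+g(0))\ne\Id$ when $g(0)\ne0$. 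General position cannot repair this, because the obstruction comes from diffeomorphisms fixing the submanifold pointwise; this is also why the classical moving-frame normalization you invoke is carried out for a \emph{fixed finite-dimensional} $G$ and only after (local) freeness of the prolonged $G$-action on jets is already known --- which is precisely what is to be proved.

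The paper goes in the direction you rejected, through the $G$-equivariant projection $\scrf^NM\to J^N$, which, as you correctly observe, enlarges stabilizers; the additional mechanism that controls this enlargement is the observation that for each $x\in M$ the action of $\Stab_G(x)$ on the fiber $\scrf^k_xM$ is smoothly real algebraic, hence $C^\omega$, after which the fiberwise argument of Theorem 26 of \cite{adolv:genericframe} applies verbatim with \tref{thm-genfree-frame} substituted for Theorem 20 of that paper. Some such extra input is unavoidable, and your proposal does not supply one.
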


\begin{proof}
For all $x\in M$, for all~$k\in\N_0$,
the action of $\Stab_G(x)$ on $\scrf_x^kM$ is smoothly real algebraic, hence~$C^\omega$.
We can therefore follow the proof of~Theorem 26 of \cite{adolv:genericframe},
except that we use \tref{thm-genfree-frame} in place of~Theorem 20 of \cite{adolv:genericframe}.
\end{proof}

%%%%%%%%%%%%%%%%%%%%%%%%%%%%%%%%%%%%%%%%%%%%%%%%%

\bibliography{list}

\end{document}